\numberwithin{equation}{section}
\newcommand{\Q}{\mathbb{Q}}
\newcommand{\Z}{\mathbb{Z}}
\newcommand{\E}{\mathcal{E}}
\newcommand{\C}{\mathbb{C}}
\newcommand{\N}{\mathbb{N}}
\newcommand{\TT}{\mathbb{T}}
\newcommand{\CC}{\mathcal{C}}
\newcommand{\la}{\lambda}
\newcommand\starop[2]{{
  \displaystyle\operatornamewithlimits{\raisebox{-0.5ex}[1.5ex][0ex]{\rm*}}_{#1}^{#2}}}
\newcommand{\setword}[2]{%
  \phantomsection
  #1\def\@currentlabel{\unexpanded{#1}}\label{#2}%
}
\newcommand\nnfootnote[1]{%
  \begin{NoHyper}
  \renewcommand\thefootnote{}\footnote{#1}%
  \addtocounter{footnote}{-1}%
  \end{NoHyper}
}
\newtheorem{theorem}{Theorem}[section]
\newtheorem{definition}[theorem]{Definition}
\newtheorem{lemma}[theorem]{Lemma}
\newtheorem{proposition}[theorem]{Proposition}
\newtheorem{example}[theorem]{Example}
\newtheorem{corollary}[theorem]{Corollary}
\newtheorem{conjecture}[theorem]{Conjecture}
\title{When amenable groups have real rank zero $C^*$-algebras}
\author{Iason Moutzouris}
\begin{document}
\maketitle
\nnfootnote{2020 Mathematics Subject Classification: 22D25, 46L05. \par 
Key words and phrases: $C^*$-algebras, real rank zero, group, Hirsch length, amenable, locally finite, normal subgroup}

\begin{abstract}
   We investigate when discrete, amenable groups have $C^*$-algebras of real rank zero. While it is known that this happens when the group is locally finite, the converse in an open problem. We show that if $C^*(G)$ has real rank zero, then all normal subgroups of $G$ that are elementary amenable and have finite Hirsch length must be locally finite.
\end{abstract}
\makeatletter
\@setabstract
\makeatother

\section{Introduction}
Let $G$ be a countable, discrete group. If $G$ is torsion free and amenable, then the Kadison-Kaplansky conjecture holds \cite[Thm. 1.3]{Baum-Connes_book}, so $C^*(G)\cong C^*_r(G)$ has no non-trivial projections. However, if $G$ is not torsion free, then for every $g\in G$ with finite order $n$, the element $\frac{1+g+g^2+...+g^{n-1}}{n}\in C^*(G)$ is a projection. If, in addition the group is locally finite, then $C^*(G)$ is an AF-algebra. \par
In \cite{effros}, Effros conjectured that if $C_r^*(G)$ is an AF-algebra, then $G$ has to be locally finite. The conjecture is still open. Actually, for amenable groups, we can strengthen the conjecture by replacing the AF-algebra assumption, with the weaker property of real rank zero:
\begin{conjecture}\label{conjecture}
    Let $G$ be a discrete, amenable group such that $C^*(G)$ has real rank zero. Then $G$ is locally finite.
\end{conjecture}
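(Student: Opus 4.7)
The plan is to establish the conjecture via its contrapositive: assuming $G$ is countable, discrete, amenable, and \emph{not} locally finite, produce an obstruction that prevents $C^*(G)$ from having real rank zero.

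First I would set up two reductions that follow from the real rank zero hypothesis. Since real rank zero passes to quotients (a theorem of Brown and Pedersen), any torsion-free amenable quotient $G/N$ of $G$ yields a quotient $C^*(G/N)$ of $C^*(G)$ that is also real rank zero. By Kadison--Kaplansky \cite[Thm.~1.3]{Baum-Connes_book}, $C^*(G/N)$ has no non-trivial projections; and a unital real rank zero $C^*$-algebra whose only projections are $0$ and $1$ must equal $\C$. Hence $G=N$, i.e., $G$ admits no non-trivial torsion-free amenable quotient, and in particular $G^{\mathrm{ab}}$ is torsion. The second reduction is the main result announced in the abstract, which already rules out any normal subgroup $N\trianglelefteq G$ that is elementary amenable of finite Hirsch length and not locally finite. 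With these in hand, the strategy becomes: in any non-locally-finite amenable $G$, locate such a normal subgroup to contradict the main result. This works cleanly when $G$ is itself elementary amenable of finite Hirsch length, or more generally whenever $G$ contains an element $g$ of infinite order whose normal closure $\langle\langle g\rangle\rangle_G$ is elementary amenable of finite Hirsch length.

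The main obstacle lies outside these tame cases. Two families of amenable groups resist the plan: (a) groups that are elementary amenable but of \emph{infinite} Hirsch length, such as the lamplighter $(\Z/2\Z)\wr\Z$ or infinite iterated wreath products of $\Z$; and (b) amenable groups that are not elementary amenable at all --- most strikingly the Grigorchuk group and similar finitely generated, infinite, torsion amenable groups, which admit no non-trivial elementary amenable normal subgroup. For (a) one should try to extend the main result to normal subgroups of arbitrary Hirsch length via an exhaustion argument, perhaps exploiting a permanence property of real rank zero under increasing unions of subalgebras $C^*(H_n)\subseteq C^*(G)$. Family (b) is the fundamental difficulty: one would need to show directly that $C^*(H)$ cannot have real rank zero whenever $H$ is a finitely generated, infinite, amenable, torsion group. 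I expect this --- ruling out real rank zero for the $C^*$-algebras of infinite finitely generated torsion amenable groups --- to be the decisive obstruction, and almost certainly the reason the conjecture remains open.
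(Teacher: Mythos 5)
The statement you are trying to prove is stated in the paper as a \emph{conjecture}, and the paper explicitly leaves it open; it only establishes partial results (Theorem \ref{main-theorem}, Corollary \ref{solvable_not_RR0}, Proposition \ref{locally_nilpotent_is_excellent}). Your proposal is therefore not a proof, and to your credit you say as much in your final sentence. Your two reductions are sound: the quotient argument via Kadison--Kaplansky (a unital real rank zero algebra with only trivial projections is $\C$, so every torsion-free amenable quotient of $G$ is trivial) is correct and is essentially the content of Proposition \ref{torsion_free_by_finite}, and invoking Theorem \ref{main-theorem} to exclude non-locally-finite normal subgroups that are elementary amenable of finite Hirsch length is exactly what the paper provides. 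Your diagnosis of the remaining obstacles also matches the paper's own discussion: the failure of real rank zero to behave well under inductive limits is precisely why the paper introduces the ``strongly not (FS)'' property in Section 7, and the Grigorchuk-type torsion groups are indeed outside the reach of every technique in the paper. But identifying the obstacles is not the same as overcoming them: cases (a) and (b) of your plan are left entirely unresolved, and they constitute the actual content of the conjecture. There is no argument here that could be completed into a proof.

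One factual slip worth flagging: the lamplighter group $(\Z/2\Z)\wr\Z$ does \emph{not} have infinite Hirsch length. Its base group $\bigoplus_{\Z}\Z/2\Z$ is locally finite, hence has Hirsch length $0$, so $h((\Z/2\Z)\wr\Z)=h(\Z)=1$. It is therefore already covered by Theorem \ref{main-theorem} (and, being finitely generated elementary amenable, by Scarparo's result as well). The correct examples of elementary amenable groups of infinite Hirsch length escaping the main theorem are of the form $\Z\wr\Z$ or $\bigoplus_{\N}\Z$, as the paper notes in Section 6.
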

In the non-amenable case the situation is more complicated. For instance,
in \cite{Dykema_Rordam}, Dykema and R{\o}rdam proved that $C^*_r(\starop{n=2}\infty(\Z/n\Z))$ has real rank zero. Note that the free product $\starop{n=2}\infty(\Z/n\Z)$ is a non-amenable and non-periodic group.\par 
Conjecture \ref{conjecture} is open (even in the non-amenable case for full group $C^*$-algebras) but it has been verified for several classes of groups, including the ones below:
\begin{itemize}
    \item Discrete, amenable groups that have a subgroup of finite index that is torsion free. This is a direct Corollary of the proof of the Kadison-Kaplansky conjecture for amenable groups. For the sake of completion, we present the argument in Proposition \ref{torsion_free_by_finite}.
    \item Discrete, nilpotent groups. It was proved by Kaniuth in \cite{Kaniuth}. (Actually, this result is more general and provides a characterization for when (non-necessarily discrete), locally compact nilpotent groups have group $C^*$-algebras that have real rank zero. But, because on this paper we will talk only about discrete groups, we restrict on that special case).
    \item Finitely generated, discrete, elementary amenable groups. This was proved by Scarparo in \cite{scarparo}
\end{itemize}
The purpose of this paper is to come up with more obstructions that prevent $C^*(G)$ from having real rank zero. Our main result is the following;
\begin{theorem}\label{main-theorem}
    Let $G$ be a discrete, amenable group and assume that $C^*(G)$ has real rank zero. Then every normal subgroup of $G$ that is elementary amenable and has finite Hirsch length, has to be locally finite.
    \end{theorem}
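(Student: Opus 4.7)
The plan is to argue by induction on the Hirsch length $h(N)$. The base case $h(N)=0$ is immediate, since an elementary amenable group of Hirsch length zero is locally finite. For the inductive step I would fix $h(N)\geq 1$ and assume, for contradiction, that $N$ is not locally finite. I would first reduce to the case where $N$ has trivial locally finite radical: the locally finite radical $L(N)$ is characteristic in $N$ and hence normal in $G$, and since real rank zero descends to quotient $C^*$-algebras by the Brown--Pedersen theorem, $C^*(G/L(N))$ still has real rank zero, so $(G,N)$ may be replaced by $(G/L(N), N/L(N))$. Under this reduction, the standard structure theory of elementary amenable groups of finite Hirsch length (Hillman--Linnell) forces the Fitting subgroup $F\leq N$ to be a non-trivial torsion-free nilpotent group of finite rank, whose center $A:=Z(F)$ is then a non-trivial torsion-free abelian characteristic subgroup of $N$, in particular normal in $G$.

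The heart of the argument is then to derive a contradiction from the existence of this non-trivial torsion-free abelian normal subgroup $A\trianglelefteq G$. I would consider the kernel $I$ of the canonical surjection $C^*(G)\twoheadrightarrow C^*(G/A)$; since $A\neq\{e\}$ the ideal $I$ is non-zero, and since real rank zero passes to closed ideals, $I$ admits an approximate unit of projections and in particular contains a non-zero projection. The strategy is to prove the opposite---that $I$ has no non-zero projection---thereby obtaining the contradiction. A Mackey / Packer--Raeburn analysis of the sequence $1\to A\to G\to G/A\to 1$ identifies $I$ with a (twisted) crossed product $C_0(\hat A\setminus\{\mathbf{1}\})\rtimes_{\alpha,\omega}(G/A)$, where $\hat A$ is a compact \emph{connected} abelian group (because $A$ is torsion-free abelian of positive finite rank) and $\mathbf{1}$ is the trivial character; the aim is to exploit the connectedness and non-compactness of $\hat A\setminus\{\mathbf{1}\}$ to rule out any non-zero projection.

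The main obstacle lies in handling this crossed product across all possibilities for the $G$-action on $A$. In the favourable case $A\cap Z(G)\neq \{e\}$, a direct argument works: the corresponding sub-ideal has the untwisted form $C_0(\widehat{A\cap Z(G)}\setminus\{\mathbf{1}\})\otimes C^*\bigl(G/(A\cap Z(G))\bigr)$, whose $C_0$ factor has no non-zero projection because its spectrum is connected non-compact and hence admits no non-empty compact-open subset. In general, however, $G$ may act on $A$ with no fixed point beyond $0$, and the analysis must proceed inside the full twisted crossed product. A promising route is to exploit that the image $G/C_G(A)\leq \mathrm{Aut}(A)\leq \mathrm{GL}(A\otimes_\Z \Q)$ is a virtually solvable amenable linear group (by the Tits alternative), so after passing to a finite-index subgroup admits a fixed flag in $A\otimes_\Z\Q$ (Lie--Kolchin); combining this flag structure with the canonical trace on $C^*(G)$ and the trace-image constraints provided by the strong Atiyah conjecture for amenable groups should suffice to rule out projections in $I$.
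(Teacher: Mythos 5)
Your group-theoretic reduction is sound and essentially parallels the paper's: both arguments quotient out the locally finite radical, invoke Hillman's structure theorem for elementary amenable groups of finite Hirsch length, and arrive at a non-trivial torsion-free abelian normal subgroup $A\trianglelefteq G$ of finite Hirsch length (the paper gets it from the last non-trivial term of the derived series of a solvable characteristic finite-index subgroup rather than from the Fitting subgroup, but this is cosmetic; your ``induction on $h(N)$'' is vestigial since the inductive hypothesis is never used). The genuine gap is the final $C^*$-algebraic step. You propose to derive the contradiction by showing that the kernel $I$ of $C^*(G)\to C^*(G/A)$ contains no non-zero projection. That is a considerably stronger statement than the failure of real rank zero, and your route to it is only a hope. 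The strong Atiyah conjecture concerns traces of matrices over the group ring inside the group von Neumann algebra, requires a uniform bound on the orders of finite subgroups --- which an arbitrary amenable $G$ need not satisfy, since your reductions control $N$ but not the torsion of $G$ itself --- and is not available for projections in the full group $C^*$-algebra. Even granting quantized trace values, this would not exclude a projection lying in $I$: the canonical trace does not factor through $C^*(G/A)$, so a projection in $I$ could a priori have trace $1/|F|$ for some finite subgroup $F\leq G$. Nor does connectedness of $\widehat{A}\setminus\{\mathbf{1}\}$ suffice on its own: the $G/A$-action on $\widehat{A}$ typically has many finite orbits of non-trivial characters, yielding subquotients of $I$ Morita equivalent to twisted group algebras of finite groups, which are full of projections; ruling out that any of these lift to $I$ is precisely the hard part you leave open.

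The paper's route through this step is different and complete. From the conjugation action $\sigma:G\to \mathrm{Aut}(A)$ it uses the Tits alternative (as you also suggest) to see that $\sigma(G)$ is a virtually solvable amenable group, but instead of Lie--Kolchin it then applies the already-established countable virtually solvable case (Corollary \ref{solvable_not_RR0}) to conclude that, under the real rank zero hypothesis, $\sigma(G)$ must be locally finite. This exhibits $G$ as an extension of a locally finite group by $C_G(A)$ with $A\subset Z(C_G(A))$ not locally finite, and Proposition \ref{locally_finite_by_nice_is_well_behaved} --- whose proof runs through the oscillation invariant on continuous fields over $\widehat{Z(C_G(A))}$ and Proposition \ref{real_rank_zero and f_v} --- produces a self-adjoint element staying at uniform distance from all finite-spectrum elements, contradicting real rank zero directly without ever needing the ideal $I$ to be projectionless. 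To repair your argument you would need either a genuine proof that $I$ has no non-zero projections in this generality, or to replace that step with an argument of this latter kind.
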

Recall that because real rank zero is preserved under taking quotients, we can weaken the assumption of the Theorem above, by assuming that $H$ is a normal subgroup of some quotient of $G$. \par 
The Hirsch length was defined for elementary amenable groups by Hillman in \cite{hillman_finite_Hirsch_length_old}. We will explain more in Section 6, but for now we will only say that virtually polycyclic groups have finite Hirsch length. So, the following Corollary is automatic.
\begin{corollary}
Let $G$ be a discrete, amenable group and assume that $C^*(G)$ has real rank zero. Then every normal subgroup of $G$ that is virtually polycyclic has to be finite.
\end{corollary}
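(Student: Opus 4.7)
The plan is to deduce the corollary directly from Theorem~\ref{main-theorem}. The only things I need to verify are that a virtually polycyclic group fits the hypotheses of that theorem (i.e.\ is elementary amenable and has finite Hirsch length) and that a virtually polycyclic locally finite group is in fact finite.

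First, I would recall that polycyclic groups are solvable, hence elementary amenable; and that the class of elementary amenable groups is closed under extensions by finite groups, so virtually polycyclic implies elementary amenable. The assertion that polycyclic groups have finite Hirsch length is standard (the Hirsch length is the number of infinite cyclic factors in a polycyclic series), and passing to a finite-index subgroup does not change the Hirsch length, so virtually polycyclic groups also have finite Hirsch length. This will be explained in detail in Section~6.

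Let $H \trianglelefteq G$ be virtually polycyclic. By the above, $H$ is elementary amenable of finite Hirsch length, and by Theorem~\ref{main-theorem}, $H$ is locally finite. It remains to argue that a locally finite, virtually polycyclic group $H$ is finite. Choose a polycyclic subgroup $K \le H$ of finite index. Then $K$ is finitely generated (every polycyclic group is finitely generated) and locally finite (as a subgroup of $H$), so $K$ itself is finite. Since $[H:K]<\infty$ and $|K|<\infty$, we conclude $|H|<\infty$, as desired.

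There is essentially no obstacle in this argument beyond invoking Theorem~\ref{main-theorem}; the only mildly nontrivial step is the closing observation that a finitely generated locally finite group is finite, which is immediate from the definition of local finiteness applied to a finite generating set.
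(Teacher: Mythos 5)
Your proof is correct and follows the same route the paper intends: the paper declares the corollary ``automatic'' from Theorem \ref{main-theorem} after noting that virtually polycyclic groups have finite Hirsch length, and you simply fill in the same details (elementary amenability, finite Hirsch length, and the standard fact that a finitely generated locally finite group is finite, so a locally finite virtually polycyclic group is finite).
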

Theorem \ref{main-theorem} yields new examples of groups verifying Conjecture \ref{conjecture}. For instance, notice that if $G$ is amenable and has a normal subgroup isomorphic to $\Z^n$, then $C^*(G)$ does not have real rank zero. Hence, we can start with an amenable group $H$ that is infinitely generated (or not elementary amenable). Every group homomorphism $\sigma: H\rightarrow Aut(\Z^n)\cong GL(n,\Z)$ gives rise to a semidirect product $G=\Z^n\rtimes H$. Notice that $C^*(G)$ does not have real rank zero and $G$ is infinitely generated (or not elementary amenable). \par
For the proof of Theorem \ref{main-theorem}, we generalize ideas from \cite{scarparo} and \cite{real_rank_zero_an_eigenvalue_variation}. A key step is Proposition \ref{locally_finite_by_nice_is_well_behaved}. For its proof, we need to work with continuous fields. More specifically, we define a quantity for elements of continuous fields, called oscillation (see beginning of Section 2). This quantity is zero for self-adjoint elements of finite spectrum (Lemma \ref{lemma_cts_field_2}) and it cannot increase much under small perturbations (Lemma \ref{lemma_cts_field_1}). This strategy allows us to find self-adjoint elements on a group $C^*$-algebra that are far away from elements of finite spectrum. After proving Proposition \ref{locally_finite_by_nice_is_well_behaved}, we use group theory tools, like Tits Alternative \cite{tits_alternative} and a characterization of groups with finite Hirsch length from \cite{Hillman_finite_Hirsch_length}, to show our main result. \par 
The main difficulty when trying to extend our results even further (e.g dropping the assumption of finite Hirsch length on Theorem \ref{main-theorem}, or generalize Scarparo's result to all elementary amenable groups), arises from the fact that we can find increasing sequences of $C^*$-algebras with real rank greater than zero (e.g matrices over $C(\TT)$), such that the inductive limit has real rank zero. Actually, whether the inductive limit has real rank zero heavily depends on the structure of the connecting maps.\par 
To bypass this issue, in Definition \ref{excellent_group} we define a property for groups, whose presence implies that the full group $C^*$-algebra is not of real rank zero. A crucial feature of this property is that it is preserved when taking increasing unions. This will allow us to show that locally nilpotent groups whose group $C^*$-algebra has real rank zero, are locally finite (see Proposition \ref{locally_nilpotent_is_excellent}). \par 

All groups will be discrete, unless clearly stated otherwise. We will denote with $e$ the identity of a group. For $g\in G$ we define $\setword{\beta}{symbolref}(g)$  $:=1-\frac{g+g^{-1}}{2}\in C^*(G)$. For a topological space $X$, we denote with $\CC_X$ the set of connected components of $X$. The center (of a $C^*$-algebra, or a group) will be denoted by $Z(.)$, while the set of projections of a $C^*$-algebra, with $P(.)$.

\section{Continuous fields of \texorpdfstring{$C^*$}{C*}-algebras and real rank zero.}

Let $X$ be a compact, Hausdorff topological space. A unital $C^*$-algebra $A$, is a \emph{$C(X)$-algebra} if there exists
a unital *-homomorphism $\theta:C(X)\rightarrow Z(A).$ For $f\in C(X)$ and $a\in A$, we write $fa$ for $\theta(f)a$. Let $x\in X$. Due to the $C(X)$-structure on $A$, we can consider the \emph{fiber} $A(x)\cong A/C_0(X/{x})A.$ Thus we can consider the \emph{evaluation to the fiber} $\pi_x: A\rightarrow A(x)$. We say that $A$ is a \emph{(unital) continuous field}, if for every $a\in A, x\mapsto ||\pi_x(a)||$ is continuous. Moreover, by \cite[Lemma 2.1]{Dadarlat_cts_fields_f.d},
\begin{equation}
    \label{(1)}
    ||a||=\max\{||\pi_x(a)||, \hspace{3mm} x\in X\}
\end{equation}
Let $a\in A$. We define the \emph{oscillation} of $a$ to be 
$$\omega_A(a):=\displaystyle{\sup_{Y\in \CC_X} \sup\{\Big|||\pi_x(a)||-||\pi_y(a)||\Big|, \hspace{3mm} x,y\in Y\}},$$
where  $\CC_X$ is the set of connected components of $X$. When it is clear on which continuous field we consider the oscillation, we write $\omega(a)$ instead of $\omega_A(a)$. \par
Note that if $\dim(X)=0$, then all connected components are points, so $\omega(a)=0$ for every $a\in A$. \par 
We will prove a few basic Lemmas regarding the oscillation.
\begin{lemma}\label{lemma_cts_field_1}
Let $a,b\in A$. Then $|\omega(a)-\omega(b)|\leq 2||a-b||$.
\begin{proof}
By the triangle inequality, we have that for every $x,y\in X$, $$\Bigg| \Big| ||\pi_x(a)||-||\pi_y(a)|| \Big|- \Big| ||\pi_x(b)||-||\pi_y(b)|| \Big| \Bigg|\leq $$
$$\leq ||\pi_x(a)-\pi_x(b)||+||\pi_y(a)-\pi_y(b)||\leq 2||a-b||.$$
Hence $$|\omega(a)-\omega(b)|=$$ $$\Bigg| \displaystyle{\sup_{Y\in \CC_X} \sup\{\Big|||\pi_x(a)||-||\pi_y(a)||\Big|, \hspace{3mm} x,y\in Y\}}-\displaystyle{\sup_{Y\in \CC_X} \sup\{\Big|||\pi_x(b)||-||\pi_y(b)||\Big|, \hspace{3mm} x,y\in Y\}}\Bigg|\leq $$
$$\displaystyle{\sup_{Y\in \CC_X} \sup \{\Bigg| \Big| ||\pi_x(a)||-||\pi_y(a)|| \Big|- \Big| ||\pi_x(b)||-||\pi_y(b)|| \Big| \Bigg|, \hspace{3mm} x,y\in Y\}}\leq 2||a-b||.$$
\end{proof}
\end{lemma}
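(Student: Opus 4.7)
The plan is to reduce the inequality to a pointwise (in $x,y$) estimate and then take suprema carefully, using only the reverse triangle inequality for norms together with the contractivity of the evaluation maps $\pi_x$.

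First, I would fix a connected component $Y\in\CC_X$ and points $x,y\in Y$, and estimate
$$\Bigl|\,|\|\pi_x(a)\|-\|\pi_y(a)\||\;-\;|\|\pi_x(b)\|-\|\pi_y(b)\||\,\Bigr|.$$
By the reverse triangle inequality applied in $\R$, this is bounded by
$$\bigl|\,\|\pi_x(a)\|-\|\pi_x(b)\|\,\bigr|+\bigl|\,\|\pi_y(a)\|-\|\pi_y(b)\|\,\bigr|,$$
and each term is in turn bounded, via the reverse triangle inequality in $A(x)$ (resp. $A(y)$) and the fact that $\pi_x,\pi_y$ are contractive $*$-homomorphisms, by $\|\pi_x(a-b)\|\leq\|a-b\|$ (resp. $\|\pi_y(a-b)\|\leq\|a-b\|$). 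This gives the pointwise bound $2\|a-b\|$, uniformly in $x,y$, and uniformly in $Y$.

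Next, I would upgrade this to the bound on oscillations. The elementary fact I need is: if $f,g\colon S\to\R_{\geq 0}$ satisfy $|f(s)-g(s)|\leq C$ for all $s\in S$, then $|\sup_S f-\sup_S g|\leq C$. Applying this first with $S=\{(x,y):x,y\in Y\}$ to the functions $(x,y)\mapsto |\|\pi_x(a)\|-\|\pi_y(a)\||$ and $(x,y)\mapsto|\|\pi_x(b)\|-\|\pi_y(b)\||$ yields
$$\Bigl|\sup_{x,y\in Y}|\|\pi_x(a)\|-\|\pi_y(a)\||-\sup_{x,y\in Y}|\|\pi_x(b)\|-\|\pi_y(b)\||\Bigr|\leq 2\|a-b\|,$$
and applying it a second time over $Y\in\CC_X$ gives $|\omega(a)-\omega(b)|\leq 2\|a-b\|$, as desired.

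There is no real obstacle here; the argument is a clean two-step application of the reverse triangle inequality, together with the standard observation that uniformly close functions have close suprema. The only mild care needed is to pass through two nested suprema (over pairs $x,y$ inside a component, and over components), which is exactly the structure already present in the definition of $\omega$.
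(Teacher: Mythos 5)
Your proposal is correct and follows essentially the same route as the paper: the same pointwise reverse-triangle-inequality estimate bounded by $2\|a-b\|$ via contractivity of the $\pi_x$, followed by passing to suprema. The only cosmetic difference is that you apply the ``close functions have close suprema'' fact twice for the two nested suprema, where the paper collapses this into a single step.
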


\begin{lemma}\label{lemma_cts_field_2}
Let $a$ be a self-adjoint element of $A$ with finite spectrum. Then $\omega(a)=0.$
\begin{proof}
Observe that $x\mapsto ||\pi_x(a)||$ is continuous and takes finitely many values. So, it has to be constant on each connected component. Thus $\omega(a)=0.$
\end{proof}
\end{lemma}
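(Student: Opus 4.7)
The plan is to exploit the finiteness of $\sigma(a)$ through the fact that each evaluation map $\pi_x : A \to A(x)$ is a unital $*$-homomorphism. First I would observe that any unital $*$-homomorphism satisfies $\sigma(\pi_x(a)) \subseteq \sigma(a)$, since if $a - \lambda$ is invertible in $A$ then its image $\pi_x(a) - \lambda$ is invertible in $A(x)$. Because $a$ is self-adjoint, so is $\pi_x(a)$, and therefore
\[
\|\pi_x(a)\| = \max\{|\lambda| : \lambda \in \sigma(\pi_x(a))\} \in \{|\lambda| : \lambda \in \sigma(a)\} \cup \{0\},
\]
a finite subset of $\R_{\geq 0}$ that is independent of $x$.

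Next, by the definition of a continuous field, the function $x \mapsto \|\pi_x(a)\|$ is continuous on $X$. A continuous real-valued function that takes only finitely many values must be locally constant, hence constant on each connected component $Y \in \CC_X$. Consequently the inner supremum $\sup\{|\,\|\pi_x(a)\| - \|\pi_y(a)\|\,| : x,y \in Y\}$ vanishes for every $Y$, so $\omega(a) = 0$. I do not foresee any real obstacle; the only step that warrants a moment's care is the spectral containment under $\pi_x$, which is the standard observation that unital $*$-homomorphisms preserve invertibility.
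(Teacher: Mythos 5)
Your proof is correct and follows the same route as the paper: the paper likewise argues that $x\mapsto\|\pi_x(a)\|$ is continuous and takes finitely many values, hence is constant on connected components. Your only addition is to spell out why the value set is finite (spectral containment under the unital $*$-homomorphism $\pi_x$ plus $\|\cdot\|=$ spectral radius for self-adjoint elements), a step the paper leaves implicit.
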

Combining the two aforementioned Lemmas we get the following Corollary.
\begin{corollary}\label{cor_cts_field_3}
Let $a=a^*\in A$ and assume that there exists a self-adjoint element with finite spectrum $b\in A$ such that $||a-b||\leq \frac{\varepsilon}{2}.$ Then $\omega(a)\leq \varepsilon.$

\end{corollary}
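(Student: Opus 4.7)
The plan is to combine Lemmas \ref{lemma_cts_field_1} and \ref{lemma_cts_field_2} in the obvious way. Since $b$ is self-adjoint with finite spectrum, Lemma \ref{lemma_cts_field_2} gives $\omega(b)=0$. Then applying Lemma \ref{lemma_cts_field_1} to the pair $a,b$ together with the hypothesis $\|a-b\|\leq \varepsilon/2$ yields
\[
\omega(a)=|\omega(a)-\omega(b)|\leq 2\|a-b\|\leq \varepsilon,
\]
which is exactly the desired conclusion.

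There is no real obstacle here; the corollary is essentially a restatement of the two previous lemmas, with the only content being to choose the factor $\varepsilon/2$ on the perturbation side so that the factor $2$ from Lemma \ref{lemma_cts_field_1} produces $\varepsilon$. I would present the proof in a single displayed inequality, noting first that $\omega(b)=0$ by Lemma \ref{lemma_cts_field_2} and then invoking Lemma \ref{lemma_cts_field_1}.
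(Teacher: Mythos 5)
Your proof is correct and is exactly the argument the paper intends: the corollary is stated as an immediate combination of Lemmas \ref{lemma_cts_field_1} and \ref{lemma_cts_field_2}, with $\omega(b)=0$ and $\omega(a)=|\omega(a)-\omega(b)|\le 2\|a-b\|\le\varepsilon$. Nothing further is needed.
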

Let $X$ be compact, Hausdorff and $n\in \N$. Observe that $A:=C(X)\otimes M_n(\C)\cong C(X,M_n(\C))$ is a (trivial) continuous field. Each fiber is isomorphic to $M_n(\C)$ and $\pi_x: A\rightarrow M_n(\C)$ is the evaluation on $x$. \par 
Moreover, by identifying $a$ with $diag(a,a,...,a)$, we can view $A$ as a $C^*$-subalgebra of $M_n(A)$. Notice that $Z(A)=Z(M_n(A))$. It is not difficult to see that if $A$ is continuous field of $C^*$-algebras over $X$ with fibers $A(x)$, then $M_n(A)$ is a continuous field of $C^*$-algebras over $X$ with fibers $M_n(A(x)).$ \par
\begin{lemma}\label{lemma_cts_field_4}
    Let $A$ be a continuous field over a compact, Hausdorff space $X$. For every $n\in \N$ and $f\in M_n(C(X))$, we have $\omega_{M_n(A)}(f)=\omega_{M_n(C(X))}(f)$.
    \begin{proof}
       Fix $n\in \N$ and $x\in X$. Let $\pi_x^{(n)}: M_n(A)\rightarrow M_n(A)/C_0(X/\{x\})M_n(A)$ be the evaluation. Then $\pi_x^{(n)}(f)=f(x)1_{M_n(A)}$. Thus $||\pi_x^{(n)}(f)||=||f(x)||_{M_n(\C)}.$ Result follows.
    \end{proof}
\end{lemma}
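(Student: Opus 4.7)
The plan is to reduce both oscillations to the supremum, over connected components of $X$, of oscillations of a single continuous function on $X$, namely $x \mapsto \|f(x)\|_{M_n(\C)}$, and then the equality of the two oscillations drops out for free.

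First I would unpack how $f \in M_n(C(X))$ sits inside $M_n(A)$. Via the unital $*$-homomorphism $\theta : C(X) \to Z(A)$ applied entrywise, each coefficient $f_{ij} \in C(X)$ becomes a central element $\theta(f_{ij}) \in A$, so $f$ corresponds to the matrix $(\theta(f_{ij}))_{ij} \in M_n(A)$. Next I would compute the evaluation at $x$. By the paragraph preceding the lemma, $M_n(A)$ is a continuous field over $X$ with fiber $M_n(A(x))$ at $x$, and $\pi_x^{(n)}$ is applied entrywise. Since $\theta$ descends, by construction of the fiber $A(x) = A / C_0(X \setminus \{x\}) A$, to the unital scalar embedding $\C \to A(x)$, $c \mapsto c \cdot 1_{A(x)}$, we get
$$\pi_x^{(n)}(f) \;=\; \bigl(f_{ij}(x) \cdot 1_{A(x)}\bigr)_{ij} \;\in\; M_n(A(x)).$$

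This matrix lies in the image of the canonical unital embedding $M_n(\C) \hookrightarrow M_n(A(x))$ given by $m \mapsto m \otimes 1_{A(x)}$, which is isometric (a standard fact: any unital $*$-homomorphism of $C^*$-algebras, in particular the unital inclusion $\C \cdot 1_{A(x)} \hookrightarrow A(x)$, induces an isometric map on matrices). Hence $\|\pi_x^{(n)}(f)\|_{M_n(A)} = \|f(x)\|_{M_n(\C)}$. On the other hand, in the trivial continuous field $M_n(C(X)) \cong C(X, M_n(\C))$, the fiber at $x$ is $M_n(\C)$ with evaluation $f \mapsto f(x)$, so the fiber norm is also $\|f(x)\|_{M_n(\C)}$.

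Thus both continuous-field structures produce the same continuous function $x \mapsto \|f(x)\|_{M_n(\C)}$ on $X$. Since the oscillation depends only on this function and on the connected components of $X$ (which are the same in either case), the two oscillations are equal. I do not foresee any serious obstacle; the argument is essentially bookkeeping, and the only point requiring a little care is checking that $\theta$ does descend to the scalar unital map on $A(x)$ and that the entrywise isometry of $M_n(\C) \hookrightarrow M_n(A(x))$ holds, both of which are standard.
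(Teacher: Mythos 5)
Your proposal is correct and follows essentially the same route as the paper: compute $\pi_x^{(n)}(f)$ as the scalar matrix $f(x)\otimes 1_{A(x)}$ and observe that its fiber norm is $\|f(x)\|_{M_n(\C)}$, so both continuous-field structures yield the same function $x\mapsto\|f(x)\|$ and hence the same oscillation. You merely supply the (standard) justification that the unital embedding $M_n(\C)\hookrightarrow M_n(A(x))$ is isometric, a detail the paper leaves implicit.
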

\begin{example}\label{ex_1}
Consider the function $z:\TT\rightarrow \C$ such that $z(x)=x$ for every $x\in \TT$. $z$ is a unitary in $C(\TT)$ and actually it is $C(\TT)=C^*(z)$. Let $v=diag(z^{\la_1},...,z^{\la_k}) \in M_k(C(\TT))$, where $\la_i \in \Z$ for every $i$ and $\la_1 \neq 0$. Consider the self-adjoint element $\beta=1-\frac{(v+v^{-1})}{2}$. Its oscillation is $\omega(\beta)=2.$  Indeed, $\beta=diag(1-Re(z^{\la_1}),...,1-Re(z^{\la_k})).$ Notice that $\beta(1)=0$. Consider $x\in \TT$ such that $Re(x^{\la_1})=-1.$ Then $s(x)=diag(2,l_2,..l_k)$ for some real numbers $l_2,...,l_k$. So, this matrix has 2 as an eigenvalue. So $||\beta(x)||\geq 2$. Note that $||\beta(y)||\leq 2$ for every $y\in \TT$. Hence $||\beta(x)||=2$ and $\omega(\beta)=2.$
\end{example}

Let $A$ be a $C^*$-algebra. The notion of real rank was introduced by L.Brown and Pedersen in \cite{real_rank_zero_intro}. A $C^*$-algebra $A$ has \emph{real rank zero} if, for every $a=a^*\in A$, and every $\varepsilon>0$, there exists $v=v^*$ that has finite spectrum, such that $||a-v||<\varepsilon$. Finite dimensional $C^*$-algebras have real rank zero. On the same paper, Brown and Pedersen proved that the set of $C^*$-algebras with real rank zero is closed under taking quotients, hereditary $C^*$-subalgebras and inductive limits. So, AF-algebras have real rank zero. A commutative $C^*$-algebra $C(X)$ has real rank zero iff $\dim(X)=0$ iff $C(X)$ is an AF-algebra. In general, there exist real rank zero $C^*$-algebras that are not AF algebras (e.g the irrational rotation algebras). The following basic Lemma is well-known to experts.
\begin{lemma}\label{trace_on_real_rank_zero}
Let $A$ be a unital, infinite dimensional $C^*$-algebra that has real rank zero and $\tau\in T(A)$ be a trace. Then $\inf \{\tau(p), \hspace{5mm} 0\neq p\in  P(A)\}=0.$
\begin{proof}
   For the sake of contradiction, assume that $\inf \{\tau(p), \hspace{5mm} 0\neq p\in P(A)\}=\delta>0.$ By \cite[Lemma 2.2]{scarparo}, there exists a sequence of pairwise orthogonal projections $(p_n)_{n\in \N}$ in $A$. By assumption, $\tau(p_n)\geq \delta$ for every $n$. But this leads to a contradiction, because $\sum_{i=1}^{\infty}p_i \leq 1.$ Proof is complete.
\end{proof}
\end{lemma}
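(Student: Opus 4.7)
The plan is to argue by contradiction. Suppose $\delta := \inf\{\tau(p) : 0 \neq p \in P(A)\} > 0$. Once I have at hand an infinite sequence $(p_n)_{n\in\N}$ of pairwise orthogonal nonzero projections in $A$, the conclusion is automatic: for every $N$, the partial sum $q_N := \sum_{n=1}^N p_n$ is a projection with $q_N \leq 1_A$, so by positivity and linearity of $\tau$,
\[
N\delta \;\leq\; \sum_{n=1}^N \tau(p_n) \;=\; \tau(q_N) \;\leq\; \tau(1_A),
\]
and letting $N\to\infty$ produces the contradiction.

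Thus the real content is to produce the sequence $(p_n)$. This is a standard consequence of real rank zero together with $\dim(A) = \infty$, and I would either invoke it as a black box (e.g.\ cite \cite[Lemma 2.2]{scarparo}, which the paper already depends on) or prove it by induction: given orthogonal nonzero projections $p_1,\dots,p_n$, set $r_n := 1_A - p_1 - \cdots - p_n$ and consider the hereditary subalgebra $r_n A r_n$, which inherits real rank zero. One needs to argue that $r_n A r_n$ is still infinite-dimensional; assuming this, any self-adjoint element of $r_n A r_n$ that is not a scalar multiple of $r_n$ can be perturbed, using real rank zero, into a self-adjoint element of finite spectrum whose nontrivial spectral projections give a nonzero $p_{n+1}\leq r_n$ orthogonal to all previous $p_i$.

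The only delicate point is therefore maintaining infinite-dimensionality of the cutdowns $r_n A r_n$, i.e.\ ruling out that $r_n A r_n = \C r_n$ for some $n$. A Peirce decomposition argument handles this: if the corner at $r_n$ were one-dimensional and the finitely many $p_i$ generated (with $r_n$) a finite-dimensional corner, the identity would decompose $A$ into finitely many finite-dimensional pieces $p_i A p_j$, contradicting $\dim(A)=\infty$. This is the step I expect to be the main obstacle if one insists on a self-contained argument, which is exactly why appealing to the already-cited Scarparo lemma is the cleanest route — the summation-against-$\tau(1_A)$ argument is the real point of the proof.
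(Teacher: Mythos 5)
Your proposal is correct and follows essentially the same route as the paper: assume the infimum is $\delta>0$, invoke \cite[Lemma 2.2]{scarparo} to produce an infinite sequence of pairwise orthogonal nonzero projections, and derive a contradiction by summing $\tau(p_n)\geq\delta$ against $\tau(1_A)$. The extra sketch of how one might prove the Scarparo lemma from scratch is not needed, since the paper (like your preferred option) simply cites it.
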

The following Proposition, which is inspired from \cite[Theorem 1.3]{real_rank_zero_an_eigenvalue_variation}, gives us a necessary condition for certain inductive limits (when all the algebras in the sequence map to continuous fields) to be of real rank zero. This will actually be the condition that will fail and cause various group $C^*$-algebras to not have real rank zero.

\begin{proposition}\label{real_rank_zero and f_v}
Let $A$ be a unital $C^*$-algebra that has real rank zero and $A=\varinjlim A_n$, where each $A_n$ is unital.  Denote with $\phi_{mn}:A_n\rightarrow A_m$ and $\mu_n:A_n\rightarrow A$ the connecting maps, which we assume to be unital. Assume that for each $n$, there exists a unital *-homomorphism $\psi_n:A_n\rightarrow B_n$, where $B_n$ is a unital, continuous field over $X_n$, where $X_n$ is a compact, Hausdorff space. Then for every $\varepsilon >0$, $n\geq 1$ and for every $a=a^*$ in $A_n$, there exists $m_0=m_0(\varepsilon, a)\geq n$ such that $\omega(\psi_m(\phi_{mn}(a)))\leq \varepsilon$ for every $m\geq m_0$.
\begin{proof}
We may assume that the self adjoint element $a$ belongs to $A_1$. Let $\varepsilon>0$. Then, because $RR(A)=0$, there exists a self-adjoint element $v$ in $A$ with finite spectrum, let $\{x_1,..,x_r\}$, such that \begin{equation}
    \label{001}
    ||\mu_1(a)-v||<\frac{\varepsilon}{6}.
\end{equation} By the Spectral Theorem,
$$v=\sum_{i=1}^r x_i p_i,$$
where $p_i$ are pairwise orthogonal projections in $A$.
\par
It is known (see for instance \cite[Lemma III.3.1]{davidson_book} that there exists $m_1\in \N$ and $q_1,..,q_r$ pairwise orthogonal projections in $A_{m_1}$ such that 
$$||p_i-\mu_{m_1}(q_i)||<\frac{\varepsilon}{6||v||r}\hspace{5mm} \forall i.$$
For every $m\geq m_1$, set $q_i^{(m)}=\phi_{mm_1}(q_i)$. Then
$$||p_i-\mu_{m}(q_i^{(m)})||<\frac{\varepsilon}{6||v||r}\hspace{5mm} \forall i.$$
Note that $|x_i|\leq ||v||$, so if $v_m=\sum_{i=1}^r x_i q_i^{(m)}$, then
$$||v-\mu_m(v_m)||<\frac{\varepsilon}{6}$$
and $v_m$ has finite spectrum. Thus
$$||\mu_1(a)-\mu_m(v_m)||<\frac{\varepsilon}{3}  $$
for every $m\geq m_1.$
So there exists $m_0$ such that
$$||\phi_{m1}(a)-v_m||\leq \frac{\varepsilon}{2} \hspace{3mm} \text{ for every } m\geq m_0.$$
Hence $||\psi_m(\phi_{m1}(a)-v_m)||\leq \frac{\varepsilon}{2}$. By Corollary \ref{cor_cts_field_3},
$\omega(\psi_m(\phi_{m1}(a)))\leq \varepsilon$, as wished.
\end{proof}

\end{proposition}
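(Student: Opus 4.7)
The plan is to exploit real rank zero of $A$ together with the fact that $\omega$ interacts well with norm perturbations (Lemma \ref{lemma_cts_field_1}) and vanishes on self-adjoint elements of finite spectrum (Lemma \ref{lemma_cts_field_2}). Given $a = a^* \in A_n$ and $\varepsilon > 0$, the first move is to use real rank zero of $A$ to find a self-adjoint $v \in A$ with finite spectrum satisfying $\|\mu_n(a) - v\| < \varepsilon/6$, and to write $v = \sum_{i=1}^{r} x_i p_i$ via the spectral theorem for pairwise orthogonal projections $p_i \in A$.

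The second move is to realize $v$ approximately as the image of a finite-spectrum element coming from some $A_{m_1}$. Since $\bigcup_m \mu_m(A_m)$ is dense in $A$, standard approximate lifting of projections along inductive limits (as in Davidson's book) produces, for $m_1$ sufficiently large, pairwise orthogonal projections $q_1,\ldots,q_r \in A_{m_1}$ whose images under $\mu_{m_1}$ are close enough to the $p_i$ that $v_{m_1} := \sum_i x_i q_i$, which is self-adjoint with finite spectrum, satisfies $\|v - \mu_{m_1}(v_{m_1})\| < \varepsilon/6$. Pushing forward along the system, $v_m := \phi_{m m_1}(v_{m_1})$ remains self-adjoint of finite spectrum in $A_m$, and the triangle inequality delivers $\|\mu_n(a) - \mu_m(v_m)\| < \varepsilon/3$ for every $m \geq m_1$.

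To finish I need to descend this estimate from $A$ down to finite level. Applying $\|\mu_m(b)\| = \lim_{k \to \infty} \|\phi_{km}(b)\|$ to $b := \phi_{mn}(a) - v_m$ and using monotonicity of the sequence $k \mapsto \|\phi_{km}(b)\|$, I can produce $m_0 \geq m_1$ with $\|\phi_{mn}(a) - v_m\| \leq \varepsilon/2$ for every $m \geq m_0$. Since each $\psi_m$ is a contractive $*$-homomorphism, this gives $\|\psi_m(\phi_{mn}(a)) - \psi_m(v_m)\| \leq \varepsilon/2$ with $\psi_m(v_m)$ self-adjoint of finite spectrum in $B_m$, and Corollary \ref{cor_cts_field_3} then yields $\omega(\psi_m(\phi_{mn}(a))) \leq \varepsilon$. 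The main obstacle is the descent step: in a general (non-injective) inductive limit one only controls the limit norm, so the uniform choice of $m_0$ has to be extracted carefully; everything else is an $\varepsilon/6$-budget managed by the triangle inequality.
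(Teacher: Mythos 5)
Your proposal is correct and follows essentially the same route as the paper: approximate $\mu_n(a)$ by a finite-spectrum element via real rank zero, lift its spectral projections approximately to a finite stage, push forward, descend the norm estimate to finite levels using $\|\mu_m(b)\|=\lim_k\|\phi_{km}(b)\|$, and conclude with Corollary \ref{cor_cts_field_3}. You even spell out the descent step (monotonicity of $k\mapsto\|\phi_{km}(b)\|$, applied to the fixed element at level $m_1$) which the paper leaves implicit.
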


\section{Basics on groups }
Let $G$ be a group. We say that a subgroup $H$ is \emph{characteristic in $G$} if for every $\sigma \in Aut(G)$, we have $\sigma(H)\subset H$. Note that this is equivalent to assume $\sigma(H)=H$ for every $\sigma \in Aut(G).$ It is well-known that the commutator subgroup $[G,G] $ and the center $Z(G)$ are characteristic in $ G$. If $G$ is abelian, the torsion subgroup $T(G) $ is characteristic in $ G$. Moreover, if $H $ is characteristic in $ G$ and $G \trianglelefteq L$, then $H \trianglelefteq L$. Also, being a characteristic subgroup is a transitive relation. \par 
Let $G$ be a group. Its \emph{derived series}, indexed $G^{(\alpha)}$, is defined as follows:
\begin{itemize}
    \item $G^{(0)}=G$
    \item $G^{(n+1)}=[G^{(n)},G^{(n)}]$
    \item $G^{(\alpha)}=\bigcap_{\beta<\alpha} G^{(\beta)}$
\end{itemize} 
$G$ is \emph{solvable} if its derived series terminates in finitely many steps. We say that a group is \emph{virtually solvable} if it has a subgroup of finite index that is solvable. Let $G$ be a virtually solvable group. By \cite[p.355]{characteristic_of_finite_index}, it has a characteristic subgroup of finite index that is solvable. Hence, it has a normal series, where each quotient is either abelian or finite. \par

The class of \emph{elementary amenable groups} is the smallest class of groups that contains $\Z$, all finite groups, and it is closed under subgroups, quotients, extensions and increasing unions. Thus all locally virtually solvable groups are elementary amenable. The converse is not true. For more information on elementary amenable groups, we refer the reader to a survey by A.Garrido (\cite{survey_on_EG}).\par 
We say that a group is \emph{periodic} if every element of the group has finite order and \emph{locally finite} if it is an increasing union of finite groups. Every locally finite group is periodic, while the converse holds for a large class of groups, including elementary amenable ones by \cite[Thm 2.3]{elem_amen_groups_chou}. However, the converse does not hold in general. For instance, the Grigorchuk group, which was defined in \cite{Grigorchuk_group} is periodic but not locally finite. Actually, it is an amenable group that is not elementary amenable. \par
The following observation is known to experts, but we present a proof for the sake of completion.

\begin{proposition}\label{form of groups property s}
Let $G$ be a virtually solvable group. Then there is a normal series
$$1=G_0\trianglelefteq G_1\trianglelefteq...\trianglelefteq G_n=G$$ such that
\begin{enumerate}[label=\roman*.]
    \item for every $i$, $G_{i+1}/ G_i$ is either locally finite or abelian.
    \item There is no $i$ such that both $G_{i+1}/ G_i$ and $G_{i+2}/ G_{i+1}$ are locally finite.
\end{enumerate}
\end{proposition}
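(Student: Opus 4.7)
The plan is to bootstrap from the sentence in the paragraph preceding the proposition: since $G$ is virtually solvable, by \cite[p.355]{characteristic_of_finite_index} it has a characteristic subgroup $H$ of finite index that is solvable. Let $H=H^{(0)}\trianglerighteq H^{(1)}\trianglerighteq\cdots\trianglerighteq H^{(n)}=1$ be the derived series of $H$. Concatenating with $G\trianglerighteq H$ yields the normal series
\[
1=H^{(n)}\trianglelefteq H^{(n-1)}\trianglelefteq\cdots\trianglelefteq H^{(1)}\trianglelefteq H^{(0)}=H\trianglelefteq G,
\]
whose consecutive quotients $H^{(i-1)}/H^{(i)}$ are all abelian, except for the top one $G/H$, which is finite and hence locally finite. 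This already produces a series satisfying condition (i).

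To upgrade it to a series satisfying also (ii), I would invoke the standard fact that an extension of a locally finite group by a locally finite group is locally finite. Briefly, if $N\trianglelefteq K$ with both $N$ and $K/N$ locally finite and $S\leq K$ is finitely generated, then the image $SN/N$ of $S$ in $K/N$ is finitely generated, hence finite; therefore $S\cap N$ has finite index in $S$, is itself finitely generated, and lies in $N$, so $S\cap N$ and thus $S$ is finite. Consequently, whenever the above series contains two consecutive locally finite quotients $G_{i+1}/G_i$ and $G_{i+2}/G_{i+1}$, I can simply delete $G_{i+1}$: the new quotient $G_{i+2}/G_i$ is a locally-finite-by-locally-finite extension, hence still locally finite, and every other consecutive quotient in the series is unchanged and therefore still abelian or locally finite.

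Each such deletion strictly shortens the series, so iterating terminates after finitely many steps in a normal series satisfying both (i) and (ii).

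There is no real obstacle: the only point to double check is that the deletion step does not violate (i), which is exactly what the extension fact above guarantees. Note in particular that two consecutive \emph{abelian} (but not locally finite) quotients are permitted by (ii), so the reduction never needs to act on them, and the merging procedure only ever collapses adjacent locally finite blocks into a single locally finite quotient.
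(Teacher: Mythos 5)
Your proposal is correct and follows essentially the same route as the paper: both start from the characteristic solvable finite-index subgroup to get a normal series satisfying (i), and both use closure of locally finite groups under extensions to collapse adjacent locally finite quotients. The only cosmetic difference is that the paper takes a series of minimal length and derives (ii) by contradiction, whereas you iteratively delete offending terms until the process terminates --- the same argument in two guises.
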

\begin{proof}

Consider a normal series
$$1=G_0\trianglelefteq G_1\trianglelefteq...\trianglelefteq G_n=G$$
that satisfies $(i)$. and has minimal length (in the sense that there is no normal series of length $m<n$ that satisfies $(i)$). Recall that by the aforementioned, there exists at least one such series. Hence, we can take a minimal one. We will show that such a sequence has to satisfy $(ii)$.
For the sake of contradiction assume that there exists $i$ such that both $G_{i+1}/ G_i$ and $G_{i+2}/ G_{i+1}$ are locally finite. Because the series is normal, $G_i\trianglelefteq G_{i+2}$. Because, both $G_{i+1}/ G_i$ and $G_{i+2}/ G_{i+1}$ are locally finite and the class of locally finite groups is closed under taking extensions, $G_{i+2}/G_i$ is locally finite. So the normal series
$$1=G_0\trianglelefteq G_1 \trianglelefteq..\trianglelefteq G_i\trianglelefteq G_{i+2}\trianglelefteq..\trianglelefteq G_n=G$$ satisfies (i). But also it has length $n-1<n$, contradicting minimality.
\end{proof}

\vspace{3mm}

We end the section with the following Lemma, which we will use in Section 5.
\begin{lemma}\label{surj_abel}
    Let $G$ be a countable, abelian group that is not locally finite. For every non-torsion element $a\in G$, for every $k\in \N$ and for every $d_i\in G$, $\omega(diag(\ref{symbolref}(a),\beta(d_2),...,\beta(d_k)))=2$ in $M_k(C(\widehat{G})).$ 
\begin{proof}
   Let $a\in G$ not torsion and write $f=diag(\beta(a),\beta(d_2),...,\beta(d_k))$. Let $\rho:G\rightarrow G/T(G)$ be the natural surjection. Then $\rho(a)\neq e$. Observe that $\widehat{G/T(G)}$ is connected and $\widehat{G/T(G)}\subset \widehat{G}.$ Because $a$ is non-torsion, $||1-\frac{\rho(a)+\rho(a)^{-1}}{2}||=2$ in $C^*(G/T(G))$.  So, by \cite[Lemma 2.1]{Dadarlat_cts_fields_f.d}, there exists $\chi\in \widehat{G/T(G)}$ such that $||\pi_{\chi}(f)||=2.$ Moreover, if $\iota\in \widehat{G}$ is the trivial character, then $\pi_{\iota}(f)=0$. Thus $\omega(f)=2.$
\end{proof}
\end{lemma}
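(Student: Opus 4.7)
My approach is to use Pontryagin duality and the connectedness of the dual of the torsion-free quotient $G/T(G)$. Under the canonical isomorphism $C^*(G)\cong C(\widehat{G})$, the element $f=\mathrm{diag}(\beta(a),\beta(d_2),\dots,\beta(d_k))$ becomes the matrix-valued function $\chi\mapsto \mathrm{diag}(1-\mathrm{Re}\,\chi(a),\,1-\mathrm{Re}\,\chi(d_2),\dots,1-\mathrm{Re}\,\chi(d_k))$, and the fiber norm $\|\pi_\chi(f)\|$ is simply the maximum of the diagonal entries. Since each entry lies in $[0,2]$, the bound $\omega(f)\le 2$ is automatic. The whole content of the lemma is therefore to exhibit a single connected component $Y$ of $\widehat{G}$ on which this fiber norm attains both $0$ and $2$.

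To locate such a $Y$, I would use the surjection $\rho: G\to G/T(G)$ to produce the dual embedding $\widehat{G/T(G)}\hookrightarrow\widehat{G}$ given by $\chi\mapsto\chi\circ\rho$. Since $G/T(G)$ is torsion-free, its dual is connected (by the classical Pontryagin correspondence between torsion-freeness of a discrete abelian group and connectedness of its compact dual), so the whole image sits inside one connected component $Y$ of $\widehat{G}$. The trivial character $\iota$ of $G$ factors through $\rho$, so $\iota\in Y$; since $\iota(d_i)=1$ for every $i$, one has $\|\pi_\iota(f)\|=0$, giving the lower endpoint.

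For the upper endpoint, I would use the hypothesis that $a$ is non-torsion: then $\rho(a)$ has infinite order in $G/T(G)$, so the cyclic subgroup $\langle\rho(a)\rangle\cong\Z$ admits a homomorphism to $\TT$ sending $\rho(a)\mapsto -1$, and divisibility of $\TT$ extends this to a character $\chi$ of $G/T(G)$. Pulling back, $\tilde\chi=\chi\circ\rho\in Y\subset\widehat{G}$ satisfies $\tilde\chi(a)=-1$, so the $(1,1)$ entry of $\pi_{\tilde\chi}(f)$ equals $2$ and hence $\|\pi_{\tilde\chi}(f)\|=2$. Combining the two evaluations within $Y$ yields $\omega(f)\ge 2$, which together with the universal upper bound gives $\omega(f)=2$. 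No serious obstacle arises; the only non-trivial input is the standard Pontryagin fact that the dual of a torsion-free discrete abelian group is connected.
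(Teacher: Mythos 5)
Your proposal is correct and follows essentially the same route as the paper: pass to the torsion-free quotient $G/T(G)$, use connectedness of its dual embedded in $\widehat{G}$, evaluate at the trivial character to get $0$, and produce a character sending $a$ to $-1$ to get $2$. The only difference is cosmetic — you construct the character explicitly via divisibility of $\TT$, whereas the paper invokes $\|1-\tfrac{\rho(a)+\rho(a)^{-1}}{2}\|=2$ in $C^*(G/T(G))$ together with the fact that the norm is attained on some fiber.
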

\section{Basics on group \texorpdfstring{$C^*$}{C*}-algebras}
Let $G$ be a discrete group. By completing the group algebra $\C[G]$ appropriately, we can construct the \emph{full group $C^*$-algebra $C^*(G)$} and the \emph{reduced group $C^*$-algebra $C^*_{r}(G)$}. More specifically, $C_r^*(G):=\overline{\lambda(\C[G])}^{||.||}\subset B(\ell^2(G))$, where $\lambda:\C[G]\rightarrow B(\ell^2(G))$ satisfies $\lambda(g)(\delta_h)=\delta_{gh}$ and is called the \emph{left regular representation}. Furthermore, $C^*(G):=\overline{\C[G]}^{||.||_{\max}}$, where $||x||_{max}=\sup\{||\pi(x)||\hspace{2mm}| \hspace{2mm} \pi:\C[G]\rightarrow B(H) \text{ is a *-representation } \}$. These $C^*$-algebras are isomorphic iff $G$ is amenable. \par 
If $\sigma: G\rightarrow H$ is a group homomorphism, then there exists a unique unital *-homomorphism $\sigma: C^*(G)\rightarrow C^*(H)$ that extends the group homomorphism. The *-homomorphism is injective (surjective) if the group homomorphism is injective (surjective) . Moreover, if $G=\bigcup_{i=1}^{\infty} G_n$, where $(G_n)_{n\geq 1}$ is an increasing sequence of subgroups of $G$, then $C^*(G)=\varinjlim C^*(G_n).$ \par 
For every group $G$, the reduced group $C^*$-algebra admits a faithful trace
$$\tau_G: C^*_r(G)\rightarrow \C$$
satisfying $\tau(a)=<a\delta_e, \delta_e>$, where $\delta_e\in \C[G]\subset C^*_r(G)$ is the function that sends the identity element $e$ to 1 and all the other group elements to zero. For ease of notation, we will identify $x$ with $\delta_x$. \par
If $G$ is a discrete, abelian group, then $C^*(G)\cong C^*_{r}(G)\cong C(\widehat{G})$, where $\widehat{G}=\{\chi:G\rightarrow \TT: \chi(e)=1 \text{ and } \chi(gh)=\chi(g)\chi(h)\}$ is the Pontryagin dual of $G$. We endow $\widehat{G}$ with the topology of pointwise convergence. With this topology, $\widehat{G}$ becomes compact and Hausdorff. If, moreover $G$ is torsion free, then $\widehat{G}$ is connected. Recall that $\widehat{\Z^n}=\TT^n$.\par

Consider the following short exact sequence of groups
\begin{equation}
    \label{111}
    \begin{tikzcd}
\{e\} \arrow{r} & N \arrow{r}{\iota} & G \arrow{r}{\pi} & H \arrow{r} & \{e\}
\end{tikzcd}
\end{equation}

where $|H|=n<\infty$. Let $E:C^*(G)\rightarrow C^*(N)$ be the conditional expectation that satisfies 
$$E(g)=\left\{
\begin{array}{rr}
     & g \text{ if } g\in N\\
     & 0 \text{ if } g\in G \text{ and } g\notin N
\end{array}
\right.$$
For each $h\in H$, fix a lift $g_h\in G$ of $h$.
It can be shown (see \cite[Section 3.2]{Connective} for more details) that there exists a faithful *-homomorphism
$$\Phi:C^*(G)\rightarrow M_n(C^*(N))$$
that satisfies
\begin{equation}
    \label{(2)}
    (\Phi(a))_{h',h}=E(g_{h'}ag_h^{-1}).
\end{equation}

Let $a\in N$. Observe that $\pi(g_{h'}ag_h^{-1})=e$ iff $h=h'.$ Hence
$$\Phi(a)=diag(g_hag_h^{-1}:\hspace{2mm} h\in H).$$
We have $tr_n \otimes\tau_N(\Phi(x))=\tau_G(x)$ for every $x\in C^*(G)$. Indeed, if $x=\sum \lambda_g g\in \C[G]$, then $\tau_G(x)=\lambda_e$. On the other hand, $tr_n \otimes\tau_N(\Phi(x))=\frac{1}{n}\sum_{i=1}^n \tau_N(\Phi(x)_{ii})$. Note that $(\Phi(g))_{ii}=0$ for every $i$, if $g\notin N$. Also, $(\Phi(e))_{ii}=e$ and $\tau_N(g_hxg_h^{-1})=0$ for $e\neq x\in N$, because $e\neq g_hxg_h^{-1}\in N$. Hence, $tr_n \otimes \tau_N(\Phi(x))=\la_e$. Because $\C[G]$ is dense in $C^*(G)$ we have that $tr_n\otimes \tau_N(\Phi(x))=\tau_G(x)$ for every $x\in C^*(G)$. Combining this with the fact that Baum-Connes conjecture holds for torsion free amenable groups and Lemma \ref{trace_on_real_rank_zero}, we deduce the following:
\begin{proposition}\label{torsion_free_by_finite}
Let $G$ be a discrete, infinite, amenable group that has a torsion free subgroup of finite index. Then $C^*(G)$ cannot have real rank zero.
\begin{proof}
    Let $N\trianglelefteq G$ be a normal subgroup of finite index that is torsion free. Because the Baum-Connes conjecture holds for $N$, $Tr_n \otimes \tau_N(p)\in \N$ for every $p\in P_n(C^*(N))$ (see \cite[Prop. 6.3.1, Thm. 1.3]{Baum-Connes_book}). Note that $Tr_n$ is the unnormalized trace on $n\times n$ matrices. By the above, $\tau_G(q)\geq \frac{1}{n}$ for every $0\neq q\in P(C^*(G))$. Result follows from Lemma \ref{trace_on_real_rank_zero}.
\end{proof}
    
\end{proposition}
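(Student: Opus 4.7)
The plan is to follow the strategy outlined in the paragraph immediately preceding the statement. First I would reduce to the case where the torsion free finite index subgroup is normal: if $H \leq G$ is torsion free of finite index, then the normal core $N := \bigcap_{g \in G} gHg^{-1}$ is normal, of finite index (bounded by $[G:H]!$ via the action of $G$ on $G/H$), and torsion free as a subgroup of $H$. Setting $n := [G:N]$, the construction recalled just before the statement produces a unital, faithful $*$-homomorphism $\Phi : C^*(G) \to M_n(C^*(N))$ satisfying the trace identity $(tr_n \otimes \tau_N) \circ \Phi = \tau_G$.

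The second step is to exploit amenability and torsion freeness of $N$. Since $N$ is amenable, $C^*(N) = C^*_r(N)$, so that $\tau_N$ is well defined on $C^*(N)$, and by the Baum-Connes conjecture for torsion free amenable groups \cite[Prop.~6.3.1, Thm.~1.3]{Baum-Connes_book} the unnormalized trace $Tr_n \otimes \tau_N$ takes values in $\mathbb{Z}_{\geq 0}$ on projections in $M_n(C^*(N))$. Faithfulness and unitality of $\Phi$ guarantee that for every nonzero projection $q \in C^*(G)$, $\Phi(q)$ is a nonzero projection in $M_n(C^*(N))$, whence $Tr_n \otimes \tau_N(\Phi(q)) \geq 1$. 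Dividing by $n$ gives the uniform lower bound $\tau_G(q) = (tr_n \otimes \tau_N)(\Phi(q)) \geq 1/n$.

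Finally, since $G$ is infinite, $C^*(G)$ is infinite dimensional (the group elements are linearly independent in $C^*_r(G)$, as witnessed by the canonical trace, and the quotient map $C^*(G) \twoheadrightarrow C^*_r(G)$ transfers this to the full algebra). Hence, if $C^*(G)$ were to have real rank zero, Lemma \ref{trace_on_real_rank_zero} applied to $\tau_G$ would force $\inf\{\tau_G(p) : 0 \neq p \in P(C^*(G))\} = 0$, contradicting the lower bound $1/n$ just obtained.

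There is no serious obstacle here, since the main inputs (the Baum-Connes conjecture for torsion free amenable groups, and the matrix embedding $\Phi$ together with its trace identity) have already been recorded in the preceding pages. The only mildly delicate points are the reduction to a normal finite index subgroup via the normal core and the verification that $\Phi$ is unital and trace-preserving; both are routine given the setup.
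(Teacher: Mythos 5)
Your proposal is correct and follows essentially the same route as the paper: pass to a normal torsion free subgroup of finite index, use the embedding $\Phi$ and its trace identity together with the integrality of $Tr_n\otimes\tau_N$ on projections (via Baum--Connes for torsion free amenable groups) to bound $\tau_G$ below by $1/n$ on nonzero projections, and conclude with Lemma \ref{trace_on_real_rank_zero}. The only difference is that you spell out the routine steps (the normal core reduction and the infinite-dimensionality of $C^*(G)$) that the paper leaves implicit.
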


Now assume that the extension in (\ref{111}) is central and $G$ is amenable (H is no longer assumed to be finite). Then, by \cite[Thm. 1.2]{MR1078249} (see also \cite[Lemma 6.3]{Extension_Cont_field}) $C^*(G)$ is a continuous field over $\widehat{N}$, where $\widehat{N}$ is the Pontryagin dual of $N$. Moreover the fiber on the trivial character of $\widehat{N}$ is isomorphic to $C^*(H)$, while more generally each fiber is isomorphic to some twisted group $C^*$-algebra $C^*(H,\sigma)$. \par 

\section{A class of groups whose  \texorpdfstring{$C^*$}{C*}-algebra is not of real rank zero.}

The following Proposition is a generalization of the ideas in the proof of \cite[Thm. 2.3]{scarparo}.

\begin{proposition}\label{locally_finite_by_nice_is_well_behaved}
Let $$\begin{tikzcd}
 \{e\} \arrow{r} & L \arrow{r}{\iota} & G \arrow{r}{\rho} & H \arrow{r} & \{e\}  
\end{tikzcd}$$
be an extension of groups, where $G$ is amenable and $H$ is locally finite. Assume one of the following holds:
\begin{enumerate}[label=\roman*.]
    \item $Z(L)$ is not locally finite.
    \item $L$ surjects to an abelian group that is not locally finite.
\end{enumerate}
Then $C^*(G)$ does not have real rank zero.
\begin{proof}
  $H$ is locally finite so $H=\bigcup_{n=1}^{\infty} H_n$, where $H_n$ is an increasing sequence of groups and $|H_n|=r_n<\infty$. Set $G_n=\rho^{-1}(H_n)$. Of course, $G_n$ is increasing and $\bigcup_{n=1}^{\infty} G_n=G.$ Moreover, $$\begin{tikzcd}
      \{e\} \arrow{r} & L \arrow{r}{\iota} & G_n \arrow{r}{\rho|_{G_n} } & H_n \arrow{r} & \{e\}
  \end{tikzcd}$$
  is exact. Fix $\{g_h  \hspace{3mm}| \hspace{3mm} h\in H\}$ to be lifts of the elements of $H$ and enumerate such that $h_1=e$ and $H_n=\{h_1,...,h_{r_n}\}$. Let $\Phi_n: C^*(G_n)\rightarrow M_{r_n}(C^*(L))$ be the maps defined in (\ref{(2)}). We have
  $$(\Phi_n(x))_{h',h}=E(g_{h'} x g_h^{-1})$$
 Assume first that (i) holds. Then $M_{r_n}(C^*(L))$ is a continuous field over $\widehat{Z(L)}$.
 Let $a\in Z(L) \leq G_1$ non-torsion. Because $Z(L)$ is a characteristic in $L$, we have that $Z(L) \trianglelefteq G_n$. Hence $\Phi_n(a)=diag(a,d_2,...,d_r)$, where $d_i\in Z(L)$ for every $i$. Because of Lemma \ref{lemma_cts_field_4} and Lemma \ref{surj_abel},
$$\omega_{M_{r_n}C^*(L)}(\Phi_n(\ref{symbolref}(a)))=2 \hspace{3mm} \text{ for every } n\in \N .$$ Result follows from Proposition \ref{real_rank_zero and f_v}. \par 
Now assume that (ii) holds. Then there exists a surjective group homomorphism $\sigma: L\rightarrow N$ for an abelian group $N$ that is not locally finite. Let $b\in N$ not torsion. By surjectivity of $\sigma$, there exists $a\in L\leq G_1$ such that $\sigma(a)=b$. The composition $\sigma \circ \Phi_n: C^*(G_n) \rightarrow M_{r_n}(C^*(N))$ sends 
$a$ to $diag(b,d_2,...,d_r)$, where $d_i\in N$. 
Note that $M_{r_n}(C^*(N))$ is a continuous field over $\widehat{N}$. Because $b$ is not torsion and Lemma \ref{surj_abel},
$$\omega_{M_{r_n}C^*(N)}(\sigma \circ \Phi_n(\ref{symbolref}(a)))=2. \hspace{3mm} \text{ for every } n\in \N .$$ Result follows from Proposition \ref{real_rank_zero and f_v}
\end{proof}
\end{proposition}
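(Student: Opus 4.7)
The plan is to apply Proposition \ref{real_rank_zero and f_v}. Since $H$ is locally finite, write $H=\bigcup_n H_n$ with $|H_n|=r_n<\infty$, and set $G_n=\rho^{-1}(H_n)$; this yields an exhaustive increasing sequence of subgroups $G=\bigcup_n G_n$, each fitting into a finite-quotient extension $\{e\}\to L\to G_n\to H_n\to\{e\}$. Formula (\ref{(2)}) then provides a faithful unital *-homomorphism $\Phi_n:C^*(G_n)\to M_{r_n}(C^*(L))$ sending each $a\in L$ to $diag(g_h a g_h^{-1}: h\in H_n)$. Note that $L$ is amenable since it is a subgroup of the amenable group $G$. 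The overall aim in both cases is to produce, for each $n$, a unital *-homomorphism $\psi_n:C^*(G_n)\to B_n$ into a continuous field $B_n$ together with one self-adjoint element $x\in C^*(G_1)$ whose images $\psi_n(\phi_{n1}(x))$ all have oscillation equal to $2$; this will contradict Proposition \ref{real_rank_zero and f_v} for any $\varepsilon<2$.

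For Case (i), because the extension $\{e\}\to Z(L)\to L\to L/Z(L)\to\{e\}$ is central and $L$ is amenable, \cite[Thm. 1.2]{MR1078249} makes $C^*(L)$ a continuous field over $\widehat{Z(L)}$, hence so is $B_n:=M_{r_n}(C^*(L))$; take $\psi_n:=\Phi_n$. Since $Z(L)$ is abelian and not locally finite, it contains a non-torsion element $a$. As $Z(L)$ is characteristic in $L$ and $L$ is normal in $G$, we have $Z(L)\trianglelefteq G_n$, so every conjugate $d_h:=g_h a g_h^{-1}$ lies in $Z(L)$. Therefore $\Phi_n(\beta(a))=diag(\beta(a),\beta(d_2),\ldots,\beta(d_{r_n}))$ actually lies inside $M_{r_n}(C^*(Z(L)))\cong M_{r_n}(C(\widehat{Z(L)}))$. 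Lemma \ref{surj_abel} gives oscillation $2$ in $M_{r_n}(C(\widehat{Z(L)}))$, and Lemma \ref{lemma_cts_field_4} promotes this to oscillation $2$ in the ambient continuous field $M_{r_n}(C^*(L))$.

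For Case (ii), let $\sigma:L\twoheadrightarrow N$ with $N$ abelian and not locally finite, pick a non-torsion $b\in N$ and any preimage $a\in L$. Set $B_n:=M_{r_n}(C^*(N))=M_{r_n}(C(\widehat{N}))$ and $\psi_n:=M_{r_n}(\sigma)\circ\Phi_n$. Then $\psi_n(\beta(a))=diag(\beta(b),\beta(\sigma(d_2)),\ldots,\beta(\sigma(d_{r_n})))$, and since the first diagonal entry is $\beta$ of a non-torsion element, Lemma \ref{surj_abel} again yields oscillation $2$. Proposition \ref{real_rank_zero and f_v} then delivers the contradiction.

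The main conceptual obstacle is arranging that the image of the chosen element actually lives in (a matrix algebra over) a center we can evaluate on a disconnected spectrum. In (i) this is exactly what the combination characteristic-plus-central buys us: characteristic-ness forces the conjugates $g_h a g_h^{-1}$ back into $Z(L)$ so that all diagonal entries are simultaneously central in $C^*(L)$, while centrality supplies the continuous field structure over $\widehat{Z(L)}$. In (ii) one bypasses the characteristic requirement by precomposing with the surjection $\sigma$. Everything else is routine bookkeeping, with Lemma \ref{lemma_cts_field_4} ensuring that oscillation is not lost when passing from $M_{r_n}(C(\widehat{Z(L)}))$ into the larger continuous field $M_{r_n}(C^*(L))$.
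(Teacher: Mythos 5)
Your proposal is correct and follows essentially the same route as the paper: exhaust $H$ by finite subgroups, pull back to $G_n=\rho^{-1}(H_n)$, use the embeddings $\Phi_n$ into $M_{r_n}(C^*(L))$, and in each case exhibit a single element $\beta(a)$ whose images have oscillation $2$ for all $n$, contradicting Proposition \ref{real_rank_zero and f_v}. You additionally spell out a few details the paper leaves implicit (amenability of $L$, the source of the continuous field structure via \cite[Thm. 1.2]{MR1078249}, and why the conjugates $g_h a g_h^{-1}$ remain in $Z(L)$), which is fine.
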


By \cite[Chapter I, Lemma 1]{hilman_book}, all infinite, finitely generated elementary amenable groups $G$ satisfy hypothesis (ii) of Proposition \ref{locally_finite_by_nice_is_well_behaved}. Thus, we recover \cite[Thm. 2.3]{scarparo}. \par 
If $G$ is countable, virtually solvable but not locally finite, then Proposition \ref{form of groups property s} implies that $G$ satisfies hypothesis (ii) of Proposition \ref{locally_finite_by_nice_is_well_behaved}. So, we deduce the following Corollary.
\begin{corollary}\label{solvable_not_RR0}
    Let $G$ be a countable, virtually solvable group such that $C^*(G)$ has real rank zero. Then $G$ is locally finite.
\end{corollary}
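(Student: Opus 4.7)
My plan is to apply Proposition \ref{locally_finite_by_nice_is_well_behaved}(ii) to $G$ directly, arguing by contradiction: assume $G$ is not locally finite, and locate inside the normal series from Proposition \ref{form of groups property s} a distinguished normal subgroup that serves as $L$.

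Concretely, let
$$1 = G_0 \trianglelefteq G_1 \trianglelefteq \ldots \trianglelefteq G_n = G$$
be a normal series as in Proposition \ref{form of groups property s}, so each $G_{i+1}/G_i$ is either locally finite or abelian. Since the class of locally finite groups is closed under (finite-step) extensions, if every quotient $G_{i+1}/G_i$ were locally finite then $G$ itself would be locally finite, contrary to hypothesis. Thus the set of indices $i$ for which $G_{i+1}/G_i$ is abelian and not locally finite is non-empty; let $k$ be the \emph{largest} such index. For all $j > k$ the quotient $G_{j+1}/G_j$ is then locally finite (being either locally finite outright, or abelian locally finite), so $G/G_k$ is an iterated extension of locally finite groups and hence locally finite.

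Now consider the extension
$$\begin{tikzcd} \{e\} \arrow{r} & G_k \arrow{r} & G \arrow{r} & G/G_k \arrow{r} & \{e\}. \end{tikzcd}$$
The group $G$ is virtually solvable, hence elementary amenable, hence amenable; the quotient $G/G_k$ is locally finite by the previous paragraph; and $G_k$ surjects onto $G_k/G_{k-1}$, which is abelian and not locally finite. Thus hypothesis (ii) of Proposition \ref{locally_finite_by_nice_is_well_behaved} is satisfied, and $C^*(G)$ cannot have real rank zero. This contradicts the standing assumption, so $G$ must be locally finite.

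The only genuinely non-formal step is the selection of $k$: one must recognize that the alternating condition in Proposition \ref{form of groups property s} is not actually needed here, and that the key fact is merely that the non-locally-finite quotients in the series are abelian, allowing one to chop $G$ at the topmost non-LF quotient and get an LF tail. Everything else (stability of local finiteness under extensions, passage to the surjection $G_k \twoheadrightarrow G_k/G_{k-1}$) is routine, so I expect no serious obstacle.
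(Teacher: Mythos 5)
Your overall strategy is exactly the one the paper intends (the paper gives no written proof of this corollary, only the remark that Proposition \ref{form of groups property s} supplies hypothesis (ii) of Proposition \ref{locally_finite_by_nice_is_well_behaved}), and your observation that the alternation condition (ii) of Proposition \ref{form of groups property s} is not actually needed here is correct. However, your final paragraph contains a consistent off-by-one error that makes two of its assertions false as written. You define $k$ as the largest index $i$ with $G_{i+1}/G_i$ abelian and not locally finite, so the quotients that are guaranteed locally finite are $G_{j+1}/G_j$ for $j\geq k+1$; these assemble into $G/G_{k+1}$, not $G/G_k$. Indeed $G/G_k$ cannot be locally finite, since it contains the non-locally-finite subgroup $G_{k+1}/G_k$. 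Likewise, the group you actually know to be abelian and not locally finite is $G_{k+1}/G_k$, not $G_k/G_{k-1}$, about which the series says nothing (and for $k=0$ the latter is not even defined). The repair is immediate: take $L=G_{k+1}$ and $H=G/G_{k+1}$ in Proposition \ref{locally_finite_by_nice_is_well_behaved}; then $H$ is locally finite (possibly trivial, which is harmless), $G$ is amenable because it is virtually solvable, and $L$ surjects onto the abelian, non-locally-finite, countable group $G_{k+1}/G_k$, so hypothesis (ii) holds and $C^*(G)$ fails to have real rank zero, giving the desired contradiction. With that substitution your argument is complete and coincides with the paper's.
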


Now we are ready to prove the main result of the section.
\begin{theorem}\label{thm_1}
Let $$\begin{tikzcd}
    \{e\} \arrow{r} & N \arrow {r} {\iota} & G \arrow{r} & H \arrow{r}{\pi} & \{e\}
\end{tikzcd}$$
be an exact sequence of amenable groups. Assume that $N$ is abelian but not locally finite and $Aut(N)$ is a linear group. Then $C^*(G)$ does not have real rank zero.
\begin{proof}
    Note that there exists a group homomorphism $\sigma: G\rightarrow Aut(N)$ satisfying
    $$\sigma(g)(x)=gxg^{-1} \hspace{5mm} \text{ for every } g\in G, x\in N.$$
    Notice that $\ker(\sigma)=C_G(N)$ and $N \subset Z(C_G(N)).$
Furthermore, we have the short exact sequence of groups
     \begin{equation}
         \label{(4)}
         \begin{tikzcd}
             \{e\} \arrow{r} & C_G(N)\arrow{r} & G \arrow{r}{\sigma} & \sigma(G) \arrow{r} & \{e\}.
         \end{tikzcd}
     \end{equation}
     Because $G$ is amenable, $\sigma(G) \leq Aut(N)$ is amenable. By assumption, $Aut(N)$ is linear, so by the Tits alternative \cite[Thm. 1]{tits_alternative}, $\sigma(G)$ is virtually solvable. Assume, for the sake of contradiction, that $C^*(G)$ has real rank zero. Then $C^*(\sigma(G))$, also has real rank zero. Hence, by Corollary \ref{solvable_not_RR0}, $\sigma(G)$ is locally finite. But we have a contradiction by Proposition \ref{locally_finite_by_nice_is_well_behaved}.
\end{proof}
\end{theorem}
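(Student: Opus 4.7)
My plan is to exploit the conjugation action of $G$ on $N$ to produce a quotient of $G$ to which Corollary \ref{solvable_not_RR0} applies, and then feed the resulting extension into Proposition \ref{locally_finite_by_nice_is_well_behaved}(i) to derive a contradiction. Since $N$ is abelian and normal in $G$, the map $\sigma\colon G\to \mathrm{Aut}(N)$ defined by $\sigma(g)(x)=gxg^{-1}$ is a well-defined group homomorphism with kernel equal to $C_G(N)$, the centralizer of $N$. Crucially, because $N$ is abelian, $N\subseteq Z(C_G(N))$, and since subgroups of locally finite groups are locally finite, the fact that $N$ is not locally finite forces $Z(C_G(N))$ not to be locally finite either. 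This gives the short exact sequence
$$\{e\}\to C_G(N)\to G\xrightarrow{\sigma}\sigma(G)\to \{e\}$$
with the center of the kernel non--locally finite.

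Next, I would use the linearity hypothesis on $\mathrm{Aut}(N)$ to control $\sigma(G)$. Since $G$ is amenable, so is its quotient $\sigma(G)$, and $\sigma(G)$ sits inside the linear group $\mathrm{Aut}(N)$. The Tits alternative \cite{tits_alternative} then forces $\sigma(G)$ to be virtually solvable (the only other option, containing a non-abelian free group, violates amenability). Now suppose for contradiction that $C^*(G)$ has real rank zero. Because the surjection $G\twoheadrightarrow\sigma(G)$ induces a surjection $C^*(G)\twoheadrightarrow C^*(\sigma(G))$, and real rank zero passes to quotients, $C^*(\sigma(G))$ also has real rank zero. Corollary \ref{solvable_not_RR0} then yields that $\sigma(G)$ is locally finite.

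At this point, the short exact sequence $\{e\}\to C_G(N)\to G\to \sigma(G)\to\{e\}$ meets exactly the hypotheses of Proposition \ref{locally_finite_by_nice_is_well_behaved}(i): $G$ is amenable, the quotient $\sigma(G)$ is locally finite, and the kernel $C_G(N)$ has non--locally finite center $Z(C_G(N))\supseteq N$. The proposition then concludes that $C^*(G)$ does not have real rank zero, contradicting our assumption. The whole argument is essentially an assembly, so I do not expect a genuinely hard step; the only subtlety to check carefully is the containment $N\subseteq Z(C_G(N))$ (which requires that $N$ be abelian, not merely normal), since this is what upgrades the abelian hypothesis on $N$ into the centrality hypothesis needed to view $M_{r_n}(C^*(C_G(N)))$ as a continuous field over $\widehat{Z(C_G(N))}$.
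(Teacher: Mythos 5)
Your proposal is correct and follows essentially the same route as the paper's own proof: the conjugation homomorphism $\sigma$ with kernel $C_G(N)$, the Tits alternative plus amenability to make $\sigma(G)$ virtually solvable, Corollary \ref{solvable_not_RR0} to upgrade this to locally finite under the real rank zero assumption, and finally Proposition \ref{locally_finite_by_nice_is_well_behaved}(i) applied to the extension $\{e\}\to C_G(N)\to G\to\sigma(G)\to\{e\}$ via the containment $N\subseteq Z(C_G(N))$. The one point you flag as needing care is exactly the point the paper also relies on, and your justification of it is correct.
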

\section{Hirsch length and the main result}
The Hirsch length was first defined for polycyclic groups, and in \cite{hillman_finite_Hirsch_length_old} it was generalized to all elementary amenable groups. More specifically, define $EG_0$ to be the set containing $\Z$ and finite groups. Also define $EG_{\alpha}$ to be $\bigcup_{\beta<\alpha} EG_{\beta}$ if $\alpha$ is a limit ordinal. Otherwise, define it to be the set that arises from the elements of $EG_{\alpha-1}$ after taking increasing unions and extensions. By \cite[Prop. 2.2]{elem_amen_groups_chou}, the set of elementary amenable groups is $\bigcup_{\alpha} EG_{\alpha}$. Define the Hirsch length inductively via the following relations:
\begin{itemize}
\item $h(\Z)=1$
\item $h(G)=0$ for every finite group $G$.
\item $h(H)+h(G/H)=h(G) \hspace{4mm} \text{ for every } H\trianglelefteq G.$
\item $h(\varinjlim G_n)=\sup h(G_n)$ for every increasing net of groups $(G_n)_{n\in \N}.$
\end{itemize}
By \cite[Theorem 1.]{hillman_finite_Hirsch_length_old}, the Hirsch length is well-defined for every elementary amenable group $G$.

Observe that $h(G)=0$ iff $G$ is locally finite. Moreover, $h(\Z^n)=n$ and more generally, in a polycyclic group, $h(G)$ counts the (finite) number of infinite (cyclic) factors. So, virtually polycyclic (and hence finitely generated nilpotent) groups have finite Hirsch length. There exist groups that have finite Hirsch length but are not finitely generated, e.g $h(\Q^n)=n$. There exist groups that have infinite Hirsch length, like $\bigoplus_{\N} \Z$ and $\Z \wr \Z$, where $\wr$ denotes the reduced wreath product. The latter is an example of a finitely generated group that has infinite Hirsch length. \par
We need the following Lemma.
\begin{lemma}\label{autom_abelian_groups}
  Let $G$ be a torsion free abelian group with finite Hirsch length. Then $Aut(G)$ is a linear group.
  \begin{proof}
Because $G$ is torsion free, $Aut(G)\subset Aut(G\otimes_{\Z} \Q).$ Moreover, $G\otimes_{\Z} \Q$ is a $\Q$-vector space. Observe that $G\otimes_{\Z} \Q=\varinjlim G$, where the n-th connecting map is multiplication with $n$. Hence $h(G\otimes_{\Z} \Q)=h(G)<+\infty.$ So, $G\otimes_{\Z} \Q \cong \Q^l$ for $l=h(G)$. Thus $Aut(G)\subset Aut(G\otimes_{\Z} \Q)=GL(l,\Q)$. Proof is complete.
\end{proof}
\end{lemma}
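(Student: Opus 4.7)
The plan is to realize $Aut(G)$ inside $GL(l,\mathbb{Q})$ for $l = h(G)$ by tensoring with $\mathbb{Q}$ and exploiting torsion-freeness to get injectivity. Since $G$ is torsion free abelian, the canonical map $G \to G \otimes_{\mathbb{Z}} \mathbb{Q}$ is injective, and any $\phi \in Aut(G)$ extends by functoriality of $-\otimes_{\mathbb{Z}}\mathbb{Q}$ to an automorphism of the $\mathbb{Q}$-vector space $G \otimes_{\mathbb{Z}} \mathbb{Q}$. Injectivity of $G \hookrightarrow G \otimes \mathbb{Q}$ forces the induced map $Aut(G) \to Aut(G\otimes \mathbb{Q})$ to be injective as well.

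The next step is to compute the $\mathbb{Q}$-dimension of $G \otimes \mathbb{Q}$. I would present $G \otimes \mathbb{Q}$ as the directed colimit $\varinjlim_n G$, where the transition maps are multiplication by successive integers (this is the standard ``localize at all primes'' picture, valid because $G$ is torsion free abelian). Using the axiom $h(\varinjlim G_n) = \sup h(G_n)$ and the fact that the maps in question are injective (again by torsion-freeness), we obtain $h(G \otimes \mathbb{Q}) = h(G) = l$. Since $G \otimes \mathbb{Q}$ is a $\mathbb{Q}$-vector space and Hirsch length of $\mathbb{Q}^l$ is $l$ (while $\mathbb{Q}^\infty$ would have infinite Hirsch length), this pins down $G \otimes \mathbb{Q} \cong \mathbb{Q}^l$.

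Finally, $Aut(\mathbb{Q}^l) = GL(l, \mathbb{Q})$, so combining with the injection from the first paragraph gives $Aut(G) \hookrightarrow GL(l,\mathbb{Q})$, which is the definition of a linear group.

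The only real subtlety is the Hirsch-length calculation for $G \otimes \mathbb{Q}$; I expect the rest to be routine. One has to make sure the colimit presentation is legitimate (i.e.\ that $G \otimes \mathbb{Q}$ really is a filtered colimit of copies of $G$ along injective $\mathbb{Z}$-linear maps) so that both the Hirsch-length axiom applies and the limiting object is recognizable as a $\mathbb{Q}$-vector space of the expected dimension. Alternatively, one can avoid the colimit step entirely by appealing directly to the fact that the torsion-free rank of an abelian group equals its Hirsch length, but the colimit argument keeps us inside the axiomatic framework set up for $h(\cdot)$ in this section.
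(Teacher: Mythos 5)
Your proposal is correct and follows essentially the same route as the paper: embed $Aut(G)$ into $Aut(G\otimes_{\Z}\Q)$ using torsion-freeness, present $G\otimes_{\Z}\Q$ as $\varinjlim G$ along multiplication maps to compute $h(G\otimes_{\Z}\Q)=h(G)=l$, conclude $G\otimes_{\Z}\Q\cong\Q^l$, and hence $Aut(G)\subset GL(l,\Q)$. The extra care you take about injectivity of $Aut(G)\to Aut(G\otimes_{\Z}\Q)$ is a welcome clarification but not a departure from the paper's argument.
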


The following Lemma is stated without proof on \cite[p. 238]{Hillman_finite_Hirsch_length}. We will prove it for the sake of completion.
\begin{lemma}\label{unique_max_loc_fin_normal}
   Every group $G$ has a unique maximal normal locally finite subgroup.
   \begin{proof}
       Let $$\E:=\{H\trianglelefteq G \hspace{3mm} | \hspace{3mm} H \text{ is locally finite } \}.$$
       Let $(L_i)_{i\in I}$ be an increasing sequence of locally finite normal subgroups of $G$. Then $L:=\bigcup_{i\in I} L_i$ is a locally finite normal subgroup of $G$. Hence, by Zorn's Lemma, $\E$ has a maximal element. We will show that $\E$ actually has a unique maximal element. For the sake of contradiction, let $\Lambda_1, \Lambda_2$ be distinct maximal elements in $\E$. Because $\Lambda_1, \Lambda_2$ are normal in $G$, the product $\Lambda_1 \Lambda_2$ is a normal subgroup of $G$. It is not difficult to verify that it is also locally finite. Finally, $\Lambda_1 \subsetneq \Lambda_1 \Lambda_2$, contradicting maximality. Proof is complete.
   \end{proof}
\end{lemma}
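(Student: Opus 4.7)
The plan is to show that the collection $\E$ of all locally finite normal subgroups of $G$ is directed under inclusion, so that its union is itself a maximum element (and therefore the unique maximal element). The key technical input is that the class of locally finite groups is closed under both extensions and directed unions, which lets the union of $\E$ stay inside $\E$.

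First I would verify directedness: given $H_1, H_2 \in \E$, the product $H_1 H_2$ is a subgroup because $H_1$ is normal, it is normal in $G$ because $H_1$ and $H_2$ are both normal in $G$, and it is locally finite because it fits in the short exact sequence
\[
\{e\} \to H_1 \to H_1 H_2 \to H_2/(H_1\cap H_2) \to \{e\},
\]
exhibiting it as an extension of one locally finite group by another (using that quotients of locally finite groups are locally finite). Hence $H_1H_2 \in \E$ and it contains both $H_1$ and $H_2$, so $\E$ is upward directed.

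Next I would form $L := \bigcup_{H \in \E} H$. Because $\E$ is directed, this is an honest subgroup of $G$: any two elements of $L$ lie in a common $H \in \E$, where they multiply, and the set is closed under inverses. It is normal in $G$ because it is a union of normal subgroups, and it is locally finite because any finite subset of $L$ is contained in some $H \in \E$, inside which it generates a finite subgroup. Therefore $L \in \E$, and by construction every member of $\E$ is contained in $L$. In particular $L$ is a maximal element, and any other maximal element must equal $L$.

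The only nontrivial ingredient is the closure of the class of locally finite groups under extensions; I would cite the standard fact (e.g.\ from Robinson's or Hillman's texts on infinite group theory) rather than reprove it, as this is the result that genuinely does the work of showing $H_1H_2$ is locally finite. Once that is in place, the rest is a clean directed-union argument and there is no need to invoke Zorn's lemma explicitly.
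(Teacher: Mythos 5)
Your proof is correct, and it reorganizes the argument in a way that is slightly different from the paper's. The paper first invokes Zorn's Lemma (checking that unions of chains of locally finite normal subgroups stay in the family $\E$) to produce \emph{a} maximal element, and only then uses the product trick --- $\Lambda_1\Lambda_2$ is a locally finite normal subgroup properly containing $\Lambda_1$ --- to derive a contradiction from the existence of two distinct maximal elements. You use the very same product trick up front to show that $\E$ is directed, and then take the union of the \emph{entire} family, which is then a normal locally finite subgroup containing every member of $\E$; this produces a maximum (not merely a maximal element) in one stroke and makes uniqueness automatic, with no appeal to Zorn's Lemma. The technical content is identical in both routes: everything rests on the fact that an extension of a locally finite group by a locally finite group is locally finite, which you correctly isolate as the one ingredient deserving a citation (it is Schmidt's theorem, available in Robinson's book). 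Your version is arguably the cleaner of the two, since it avoids transfinite machinery and yields the stronger conclusion that $\Lambda(G)$ is the union of all locally finite normal subgroups of $G$.
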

For the rest of the paper, for a group $G$, we will write $\Lambda(G)$ to denote the unique maximal, normal locally finite subgroup of $G$.
\begin{lemma}\label{characteristic}
For every group $G$, $\Lambda(G)$ is characteristic in $ G.$
\begin{proof}
    Let $\sigma: G\rightarrow G$ be an automorphism of $G$. Then $\sigma(\Lambda(G))$ is a locally finite, normal subgroup of $G$. Hence, by Lemma \ref{unique_max_loc_fin_normal} and its proof, we get that $\sigma(\Lambda(G))\subset \Lambda(G)$. Thus $\Lambda(G) $ is characteristic in $ G.$
\end{proof}
\end{lemma}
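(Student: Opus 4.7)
The plan is to verify the two defining properties of $\Lambda(G)$ (normality and local finiteness) are preserved under any automorphism, and then invoke the uniqueness established in Lemma \ref{unique_max_loc_fin_normal}. The work is essentially bookkeeping.

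First I would fix an arbitrary $\sigma\in\mathrm{Aut}(G)$ and focus on the image $\sigma(\Lambda(G))$. I would check normality: for any $h\in G$, writing $h=\sigma(g)$ for a unique $g\in G$, we have
$$h\,\sigma(\Lambda(G))\,h^{-1}=\sigma(g)\sigma(\Lambda(G))\sigma(g)^{-1}=\sigma\bigl(g\,\Lambda(G)\,g^{-1}\bigr)=\sigma(\Lambda(G)),$$
using that $\Lambda(G)\trianglelefteq G$. So $\sigma(\Lambda(G))\trianglelefteq G$.

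Next I would check local finiteness of $\sigma(\Lambda(G))$: any finitely generated subgroup of $\sigma(\Lambda(G))$ is of the form $\sigma(F)$ for some finitely generated $F\le\Lambda(G)$; since $\Lambda(G)$ is locally finite, $F$ is finite, and $\sigma$ being injective forces $\sigma(F)$ to be finite as well.

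Finally, I would apply Lemma \ref{unique_max_loc_fin_normal}: since $\Lambda(G)$ is the \emph{unique} maximal element of the family of normal locally finite subgroups, the subgroup $\sigma(\Lambda(G))$ constructed above must be contained in $\Lambda(G)$. As $\sigma$ was arbitrary, this is exactly the definition of being characteristic. The only step requiring any care is the normality of $\sigma(\Lambda(G))$, and even that is immediate once one remembers that $\sigma$ permutes $G$. There is no real obstacle; the lemma is a direct corollary of Lemma \ref{unique_max_loc_fin_normal} together with the fact that the three closure properties defining $\Lambda(G)$ (being a subgroup, being normal, being locally finite) are each stable under automorphisms.
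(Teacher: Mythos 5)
Your proof is correct and takes essentially the same route as the paper: show that $\sigma(\Lambda(G))$ is again a normal, locally finite subgroup of $G$ and then invoke Lemma \ref{unique_max_loc_fin_normal} to conclude $\sigma(\Lambda(G))\subset\Lambda(G)$. One small caveat, which the paper itself flags by citing ``the Lemma \emph{and its proof}'': uniqueness of the maximal element of a poset does not by itself force every element to lie below it, so to get the containment you also need the product argument from that proof (the product of two normal locally finite subgroups is again normal and locally finite), which upgrades ``unique maximal'' to ``greatest''.
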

Observe that $G/\Lambda(G)$ does not have any non-trivial, normal, locally finite subgroups.
Now we are ready to prove our main result: \par
\vspace{6mm}
\textbf{Proof of Theorem \ref{main-theorem}} \par
We will actually show the contrapositive: i.e we will show that if $G$ is discrete, amenable and has a normal subgroup $H$ which is elementary amenable with finite Hirsch length and is not locally finite, then $C^*(G)$ does not have real rank zero. \par
By \cite[Theorem]{Hillman_finite_Hirsch_length}, we have that $H/ \Lambda(H)$ is virtually solvable. Because $\Lambda(H) $ is characteristic in $ H$, we have that $\Lambda (H) \trianglelefteq G$ and $H/ \Lambda(H) \trianglelefteq G/ \Lambda(H).$ Note that amenability and finite Hirsch length is preserved under taking quotients. Moreover the property of having real rank zero is preserved under taking quotients. Hence, we may assume that $H$ is non-trivial, is virtually solvable with finite Hirsch length and has no non-trivial normal locally finite subgroups. \par 
By \cite[p.355]{characteristic_of_finite_index}, there exists $L$ characteristic in $ H$ that is solvable and it has finite index in $H$. Note that $L \trianglelefteq G$ so we may assume that $H$ is non-trivial, is solvable with finite Hirsch length and has no normal, non-trivial, locally finite subgroups. \par
Because $H$ is solvable, its derived series has finite length. Because the commutator of a group is a characteristic subgroup, the last non-zero group on the derived series, let $S$ is an abelian characteristic subgroup of $H$. Moreover, by assumption, the fact that in an abelian group the torsion subgroup is characteristic, and the fact that being a characteristic subgroup is a transitive property, we deduce that $S$ is torsion free. Otherwise the torsion subgroup would be a non-zero normal locally finite subgroup of $H$. Note that $S\trianglelefteq G$, so we may assume that $H$ is non-trivial, is abelian, torsion free with finite Hirsch length. But now result yields from Lemma \ref{autom_abelian_groups} and Theorem \ref{thm_1}. \qed
\par
\vspace{5mm}
We will end the section with one more Corollary of Theorem \ref{main-theorem}. \par

We say that a group $G$ is with \emph{infinite conjugacy classes}, or \emph{icc} if it is not trivial and for every $a\neq e$, the set $\{ g^{-1}ag \hspace{3mm} | \hspace{3mm} g\in G\}$ is infinite. Examples of icc groups are $S_{\infty}$ and all free groups. On the other hand, nilpotent groups cannot be icc as they have non-trivial center.
\begin{corollary}
    Let $G$ be a discrete amenable group such that $C^*(G)$ has real rank zero. 
    There exists a locally finite, normal subgroup $H\trianglelefteq G$ such that $G/H$ is either trivial or icc.
    \begin{proof}
      Assume that $G$ is not locally finite. We will show that $\{e\}\neq G/\Lambda(G)$ is icc, which is enough to yield the result. For the sake of contradiction, assume that $G/\Lambda(G)$ is not icc. By construction, it has no non-trivial, locally finite, normal subgroups. So, by \cite[Proposition 1.1]{icc_extension}, $G/\Lambda(G)$ has a normal subgroup isomorphic to $\Z^n$ for some $n$. By assumption $C^*(G/\Lambda(G))$ has real rank zero. Thus, we have a contradiction by Theorem \ref{main-theorem}.
    \end{proof}
\end{corollary}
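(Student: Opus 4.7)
The plan is to take $H = \Lambda(G)$, the unique maximal normal locally finite subgroup guaranteed by Lemma \ref{unique_max_loc_fin_normal}. This choice makes $H$ locally finite and normal, and by Lemma \ref{characteristic} it is even characteristic. If $G$ itself is locally finite then $H = G$ and the quotient $G/H$ is trivial, so we are done in that case. Otherwise we must show that $G/\Lambda(G)$ is icc.

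The key structural observation is that $G/\Lambda(G)$ has no non-trivial normal locally finite subgroups: any such subgroup would lift to a normal locally finite subgroup of $G$ strictly larger than $\Lambda(G)$, contradicting maximality. I would argue the icc property by contradiction: assume $G/\Lambda(G)$ is non-trivial and not icc. Then I want to invoke a structural theorem from the icc literature (cited as \cite{icc_extension}) which says that a non-trivial, non-icc group with no non-trivial normal locally finite subgroup must contain a normal subgroup isomorphic to some $\Z^n$ with $n \geq 1$. This is precisely the kind of obstruction that the main theorem rules out.

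With such a normal $\Z^n \trianglelefteq G/\Lambda(G)$ in hand, I would finish by noting that real rank zero passes to quotients, so $C^*(G/\Lambda(G))$ has real rank zero. Since $\Z^n$ is elementary amenable with Hirsch length $n < \infty$ and is certainly not locally finite, Theorem \ref{main-theorem} applied to the amenable group $G/\Lambda(G)$ and its normal subgroup $\Z^n$ gives a contradiction. Hence $G/\Lambda(G)$ must be icc, completing the proof.

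The main obstacle here is not the real-rank-zero analysis (which is entirely delegated to Theorem \ref{main-theorem}) but rather locating and correctly applying the group-theoretic fact that a non-icc group with no non-trivial normal locally finite subgroup must contain a free abelian normal subgroup of positive rank. Everything else is a direct assembly: maximality of $\Lambda(G)$, the quotient closure of real rank zero, and a single application of the main theorem.
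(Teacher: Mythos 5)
Your proposal is correct and follows essentially the same route as the paper: take $H=\Lambda(G)$, note the quotient has no non-trivial normal locally finite subgroups, invoke the cited icc result to produce a normal $\Z^n$, and derive a contradiction from Theorem \ref{main-theorem} via quotient-closure of real rank zero. The only (welcome) addition is that you spell out the lifting argument for maximality; otherwise the two proofs coincide.
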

\section{Groups where real part of all non-torsion elements is far away from elements of finite spectrum in the group \texorpdfstring{$C^*$}{C*}-algebra}
Our main difficulty in proving Conjecture \ref{conjecture}, at least in the case of elementary amenable groups, arises from the fact that we can find increasing sequences of $C^*$-algebras with real rank greater than zero (e.g matrices over $C(\TT)$), such that the inductive limit has real rank zero). That's why define a property for groups, whose presence implies that the full group $C^*$-algebra does not have real rank zero. A crucial feature of this property is that it is preserved when taking increasing unions (Prop. \ref{excellent_ind_limits}). \par
Set $d(a):=\inf\{ ||a-v||, \hspace{5mm} v=v^*\in C^*(G) \text { and }v \text{ has finite spectrum}\}.$ Note that $d(a)\leq ||a||$ and $d(a\pm \lambda\cdot 1)=d(a)$ for every $\lambda\in \C$. A $C^*$-algebra $A$ has real rank zero, iff $d(a)=0$ for every $a=a^*\in A$.
\begin{definition}\label{excellent_group}
We say that a group $G$ is \emph{strongly not (FS)} if it is not periodic and for every $g\in G$ that is non-torsion, we have that $d(\frac{g+g^{-1}}{2})=1.$
\end{definition}
Obviously, if $G$ is strongly not (FS), $C^*(G)$ cannot have real rank zero. We do not know if the converse holds. If it holds, then \cite[Thm 2.3]{scarparo} would automatically yield that Conjecture \ref{conjecture} would be true for all elementary amenable groups. Also Proposition \ref{locally_finite_by_nice_is_well_behaved} and Theorem \ref{main-theorem} would yield that the group $C^*$-algebras of more (non-elementary) amenable groups do not have real rank zero.
\begin{proposition}\label{excellent_ind_limits}
If $G_n$ is an increasing union of strongly not (FS) groups, then $G=\bigcup_{i=1}^{\infty} G_n$ is also strongly not (FS).
  \begin{proof}
      Let $g\in G$ non-torsion, $\varepsilon>0$ and $w=w^*\in C^*(G)$ that has finite spectrum. By the spectral theorem, there is $n\in \N$ such that $g\in G_n$ and $v=v^*\in C^*(G_n)$ with finite spectrum such that $||w-v||<\varepsilon.$ Because $G_n$ is strongly not (FS), $||\frac{g+g^{-1}}{2}-v||\geq 1.$ Thus  $||\frac{g+g^{-1}}{2}-w||\geq 1-\varepsilon.$ Because $\varepsilon>0$, $g$ and $w$ were selected at random, $d(\frac{g+g^{-1}}{2})=1$ for every $g\in G$ non-torsion. Hence $G$ is strongly not (FS).
  \end{proof}  
\end{proposition}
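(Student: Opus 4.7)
The plan is to reduce the distance estimate in $C^*(G)$ to the corresponding estimate in $C^*(G_m)$ for a sufficiently large $m$, which is exactly what the hypothesis provides. Since $G$ contains some non-torsion element coming from one of the $G_n$ (which is itself non-periodic by virtue of being strongly not (FS)), $G$ is automatically non-periodic, so the real content is establishing $d\bigl(\tfrac{g+g^{-1}}{2}\bigr)=1$ for each non-torsion $g\in G$.

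First I would fix $g\in G$ non-torsion, choose $n_0$ with $g\in G_{n_0}$, and observe that $g$ is non-torsion in every $G_m$ with $m\geq n_0$. Recall that the inclusions $G_n\hookrightarrow G_{n+1}$ induce $C^*(G)=\varinjlim C^*(G_n)$ with isometric connecting maps, as mentioned in Section 4. Now let $w=w^*\in C^*(G)$ have finite spectrum and fix $\varepsilon>0$; write $w=\sum_{i=1}^r x_i p_i$ by the spectral theorem, with $p_i$ pairwise orthogonal projections. By the standard projection-lifting lemma for inductive limits of unital $C^*$-algebras (e.g. Davidson, Lemma III.3.1) — which was already used in the proof of Proposition \ref{real_rank_zero and f_v} — there exist $m\geq n_0$ and pairwise orthogonal projections $q_i\in C^*(G_m)$ with $\sum_i |x_i|\,||p_i-q_i||<\varepsilon$. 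Then $v_m:=\sum x_i q_i\in C^*(G_m)$ is self-adjoint with finite spectrum and $||w-v_m||<\varepsilon$.

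Applying the hypothesis that $G_m$ is strongly not (FS) to the non-torsion element $g\in G_m$ yields $||\tfrac{g+g^{-1}}{2}-v_m||\geq 1$ in $C^*(G_m)$, and this inequality transfers to $C^*(G)$ by isometry of the inclusion. The triangle inequality then gives $||\tfrac{g+g^{-1}}{2}-w||\geq 1-\varepsilon$; since $\varepsilon>0$ and $w$ were arbitrary, we conclude $d\bigl(\tfrac{g+g^{-1}}{2}\bigr)\geq 1$, while the reverse inequality is automatic (take $v=0$, noting $||\tfrac{g+g^{-1}}{2}||\leq 1$). Hence $G$ is strongly not (FS). The only point requiring any care is approximating a finite-spectrum self-adjoint element of the limit by finite-spectrum self-adjoint elements from the tail, which projection lifting handles cleanly, so there is no serious obstacle here — the proposition is essentially a formal consequence of how finite-spectrum elements behave under inductive limits.
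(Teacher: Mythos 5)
Your proof is correct and follows essentially the same route as the paper: approximate the finite-spectrum element $w$ by a finite-spectrum self-adjoint $v$ in some $C^*(G_m)$, apply the hypothesis on $G_m$, and conclude via the triangle inequality. You merely make explicit (via the projection-lifting lemma) the approximation step that the paper compresses into the phrase ``by the spectral theorem,'' and you also record the easy reverse inequality $d\leq 1$ and the non-periodicity of $G$, which the paper leaves implicit.
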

By Lemma \ref{surj_abel} and Lemma \ref{lemma_cts_field_1}, every countable abelian group that is not locally finite is strongly not (FS).

\begin{lemma}\label{locally_finite_by_torsion_free_excellent}
Let $$\begin{tikzcd}
    \{e\} \arrow{r} & N \arrow{r}{\iota} & G \arrow{r}{\pi} & H \arrow{r} & \{e\}  
\end{tikzcd}$$
be short exact sequence of groups, with $N$ periodic and $H\neq \{e\}$ torsion free amenable. Then $G$ is strongly not (FS).
\begin{proof}
    Let $g\in G$ non-torsion. Because $N$ is periodic, $g\notin G$. Hence $\pi(g)\neq e$. Because $H$ is torsion free amenable, the Kadison-Kaplansky conjecture holds for $H$ by \cite[Thm. 1.3]{Baum-Connes_book}. Hence $C^*(H)$ does not have any non-trivial projections. Hence, we have $d(\frac{g+g^{-1}}{2})\geq d(\frac{\pi(g)+\pi(g)^{-1}}{2})=1.$ Thus $d(\frac{g+g^{-1}}{2})=1$. Because $g$ is a random non-torsion element, we deduce that $G$ is strongly not (FS).
\end{proof}
\end{lemma}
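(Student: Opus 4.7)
The plan is to verify both clauses of Definition \ref{excellent_group}: that $G$ is non-periodic, and that $d\bigl(\tfrac{g+g^{-1}}{2}\bigr)=1$ for every non-torsion $g\in G$. Non-periodicity is immediate: since $H\neq\{e\}$ is torsion free, any $g\in G$ projecting to a non-identity element of $H$ must have infinite order, because $\pi(g^n)=\pi(g)^n\neq e$ for all $n\geq 1$.

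For the quantitative part, I would reduce to a computation inside $C^*(H)$ via the quotient $\pi\colon C^*(G)\to C^*(H)$. This *-homomorphism is contractive and sends self-adjoint finite-spectrum elements to self-adjoint finite-spectrum elements (spectra can only shrink under a *-homomorphism), so $d$ is monotone under $\pi$, yielding
\[ d\left(\tfrac{g+g^{-1}}{2}\right) \;\geq\; d\left(\tfrac{\pi(g)+\pi(g)^{-1}}{2}\right). \]
Since $N$ is periodic, a non-torsion $g\in G$ must lie outside $N$, so $\pi(g)\neq e$, and torsion-freeness of $H$ then forces $\pi(g)$ to have infinite order in $H$.

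The crux is to show the right hand side equals $1$. By the Kadison--Kaplansky conjecture for torsion free amenable groups (as cited earlier in the paper), $C^*(H)$ has only the projections $0$ and $1$, so every self-adjoint finite-spectrum element of $C^*(H)$ is a real scalar multiple of $1$; hence the distance reduces to $\inf_{\lambda\in\R}\bigl\|\tfrac{\pi(g)+\pi(g)^{-1}}{2}-\lambda\cdot 1\bigr\|$. Amenability of $H$ makes the subgroup inclusion $C^*(\langle\pi(g)\rangle)\hookrightarrow C^*(H)$ isometric, and $\langle\pi(g)\rangle\cong\Z$ identifies the former with $C(\TT)$, with $\tfrac{\pi(g)+\pi(g)^{-1}}{2}$ corresponding to $\cos\theta$. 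The elementary estimate $\inf_{\lambda\in\R}\|\cos\theta-\lambda\|_{\infty}=1$, combined with the trivial upper bound $d(a)\leq\|a\|\leq 1$ (take $v=0$), finishes the proof. I do not expect a serious obstacle here; the only step deserving care is the monotonicity of $d$ under $\pi$, which rests on the observation that *-homomorphisms preserve self-adjointness and cannot enlarge the spectrum.
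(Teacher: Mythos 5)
Your proposal is correct and follows essentially the same route as the paper: pass through the quotient $\pi\colon C^*(G)\to C^*(H)$ using monotonicity of $d$, then invoke the Kadison--Kaplansky conjecture for the torsion free amenable group $H$ to force finite-spectrum self-adjoint elements of $C^*(H)$ to be scalars. You merely make explicit two details the paper leaves implicit, namely the non-periodicity of $G$ and the reduction to the computation $\inf_{\lambda\in\R}\|\cos\theta-\lambda\|_\infty=1$ in $C^*(\langle\pi(g)\rangle)\cong C(\TT)$.
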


\begin{lemma}\label{nilpotent_is_loc_fin_by_TF}
Every nilpotent group is periodic by torsion free.
\begin{proof}
    Let $G$ be a nilpotent group and set $$N:=\{x\in G \hspace{3mm} | \hspace{3mm} o(g)<\infty\}.$$
    By \cite[Ch. 1, Cor. 10]{Segal_polycyclic}, $N$ is a periodic, normal subgroup of $G$ and $G/N$ is torsion free. Result follows.
\end{proof}

\end{lemma}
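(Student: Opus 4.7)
The plan is to directly construct the required normal periodic subgroup as $N := \{g \in G : o(g) < \infty\}$, the set of torsion elements of $G$, and to verify in turn that (a) $N$ is a subgroup, (b) $N$ is normal in $G$, and (c) $G/N$ is torsion free. Once these three facts are established, $N$ is evidently periodic by construction, so the conclusion follows. The only non-trivial point is (a); the other two will be essentially formal.

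For (a), my strategy is to reduce to the finitely generated case. Given $x, y \in N$, consider $H := \langle x, y \rangle \leq G$. Since $G$ is nilpotent, so is $H$, and $H$ is finitely generated, so by Malcev's theorem $H$ is polycyclic. The key classical fact I would invoke (e.g., from Segal's book on polycyclic groups) is that in any polycyclic group the torsion elements form a finite characteristic subgroup. Applying this to $H$, both $x$ and $y$ belong to this finite torsion subgroup of $H$, hence so does $xy$, which is therefore torsion in $H$ and a fortiori in $G$. Similarly $x^{-1}$ is torsion (same order as $x$), so $N$ is closed under the group operations.

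Alternatively, if one wished to avoid the polycyclic black box, an induction on the nilpotency class $c$ of $G$ works: the base case $c = 1$ is abelian and trivial via $(xy)^{\operatorname{lcm}(o(x),o(y))} = e$; for the inductive step, reduce modulo $Z(G)$, so that $\bar{x}, \bar{y} \in G/Z(G)$ (class $c-1$) satisfy $(\overline{xy})^k = \bar{e}$ for some $k$, and then one must control the order of $(xy)^k \in Z(G)$ in terms of the orders of $x$ and $y$ using iterated commutator identities in a nilpotent group. This is technically the harder route, which is exactly why I prefer the reduction to the polycyclic statement.

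For (b) and (c), the arguments are immediate and routine: conjugation preserves element order, so $N$ is closed under conjugation, hence normal (in fact characteristic, since any automorphism of $G$ preserves order). For the torsion-free quotient, if $gN \in G/N$ has finite order $n$, then $g^n \in N$, so $g^n$ has finite order, forcing $g$ itself to have finite order, i.e., $g \in N$ and $gN = N$. The main obstacle is thus entirely concentrated in step (a); everything after that is bookkeeping.
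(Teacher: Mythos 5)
Your overall route is the same as the paper's: take $N$ to be the set of torsion elements and show it is a normal (indeed characteristic) periodic subgroup with torsion-free quotient. The paper simply cites a single result of Segal for all of this, whereas you unpack it into steps (a)--(c); steps (b) and (c) are fine as you write them.

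There is, however, a genuine error in the black box you invoke for step (a). The statement ``in any polycyclic group the torsion elements form a finite characteristic subgroup'' is false: the infinite dihedral group $\langle a,b \mid a^2=b^2=e\rangle$ is polycyclic, $a$ and $b$ are torsion, yet $ab$ has infinite order, so its torsion elements do not form a subgroup at all. The correct classical fact is about finitely generated \emph{nilpotent} groups (equivalently, nilpotent polycyclic groups): there the torsion elements form a finite characteristic subgroup. Since your $H=\langle x,y\rangle$ is finitely generated and nilpotent, the corrected statement applies and your argument goes through verbatim once you cite the right hypothesis; but as written the ``key classical fact'' is not a theorem, and the detour through Malcev's theorem to polycyclicity is a red herring --- what you actually need is nilpotency of $H$, which you already have for free. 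Your alternative sketch by induction on the nilpotency class is the standard elementary proof of exactly this nilpotent fact, so either route closes the gap.
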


Combining Proposition \ref{excellent_ind_limits}, Lemma \ref{locally_finite_by_torsion_free_excellent}, Lemma \ref{nilpotent_is_loc_fin_by_TF} and the fact that an elementary amenable, periodic group is locally finite, we deduce the following:

\begin{proposition}\label{locally_nilpotent_is_excellent}
Let $G$ be a locally nilpotent group that is not locally finite. Then it is strongly not (FS). In particular, $C^*(G)$ does not have real rank zero.

\end{proposition}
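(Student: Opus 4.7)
The plan is to realize $G$ as an increasing union $G = \bigcup_n G_n$ of finitely generated nilpotent subgroups and invoke Proposition \ref{excellent_ind_limits}, once each $G_n$ has been shown to be strongly not (FS). The first step is to produce a non-torsion element of $G$. Since $G$ is locally nilpotent it is elementary amenable (as a directed union of nilpotent subgroups), so if $G$ were periodic then the principle recalled just before the proposition would force $G$ to be locally finite, contrary to hypothesis. Hence I would fix some $g \in G$ of infinite order.

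Next I would choose an increasing chain $G_1 \subseteq G_2 \subseteq \cdots$ of finitely generated subgroups with $g \in G_1$ and $\bigcup_n G_n = G$. Local nilpotence makes each $G_n$ nilpotent, and because $g \in G_n$, none of them is periodic. Applying Lemma \ref{nilpotent_is_loc_fin_by_TF} to $G_n$ supplies a short exact sequence
$$\{e\} \to N_n \to G_n \to G_n / N_n \to \{e\}$$
with $N_n$ periodic and $G_n/N_n$ torsion free; non-periodicity of $G_n$ forces $G_n/N_n \neq \{e\}$, and this quotient is torsion-free amenable since nilpotent groups are amenable. Lemma \ref{locally_finite_by_torsion_free_excellent} then gives that every $G_n$ is strongly not (FS).

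Proposition \ref{excellent_ind_limits} will then promote this to $G$ being strongly not (FS), and the ``in particular'' clause follows from the observation immediately after Definition \ref{excellent_group}. I do not anticipate a substantial obstacle: the lemmas already furnish the two engines (extracting a non-trivial torsion-free amenable quotient from a non-periodic nilpotent group, and assembling ``strongly not (FS)'' from a chain). The only real content to supply is the reduction from ``not locally finite'' to ``not periodic'' via the elementary-amenable principle, since without a non-torsion $g$ there is no hook on which to hang the chain $(G_n)$.
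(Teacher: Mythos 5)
Your proposal is correct and follows essentially the same route as the paper, which deduces the proposition by combining Proposition \ref{excellent_ind_limits}, Lemma \ref{locally_finite_by_torsion_free_excellent}, Lemma \ref{nilpotent_is_loc_fin_by_TF}, and the fact that a periodic elementary amenable group is locally finite. Your write-up simply makes explicit the details (fixing a non-torsion element, arranging it into every term of the chain) that the paper leaves to the reader.
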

The following example from \cite{hillman_finite_Hirsch_length_old} yields a new example not covered by Theorem \ref{thm_1}. 
\begin{example} 
Let $x_k\in \bigoplus_{\Z} \Z$ be the sequence that has 1 on the $k$-th position and 0 elsewhere. For $i\in \Z$, consider the automorphism $e_i \in Aut(\bigoplus_{\Z} \Z)$ that satisfies $e_i(x_i)=x_i+x_{i+1}$ and $e_i(x_j)=x_j$ if $i\neq j.$ Let $G\leq Aut(\bigoplus_{\Z} \Z)$ be the group generated by $\{e_i\in \Z\}$. As $G$ is an increasing union of subgroups isomorphic to groups of upper triangular matrices, it is locally nilpotent. So, by Proposition \ref{locally_nilpotent_is_excellent}, $C^*(G)$ does not have real rank zero. On the other hand, it can be shown that $G$ has no non-trivial normal, abelian subgroups and it has infinite Hirsch length.

\end{example}
\vspace{5mm}
\textbf{Acknowledgements:} I would like to thank my advisor Marius Dadarlat for introducing me to the problem and for making useful comments on earlier drafts of this paper. I would also like to thank the referee for their useful comments.

\vspace{10mm}

\begin{center}

\bibliographystyle{abbrv}
\small

\bibliography{paper_2}
\end{center}
\emph{Iason Moutzouris: Department of Mathematics, Purdue University, West Lafayette, IN 47907, USA }\par

\emph{Email address: imoutzou@purdue.edu} \par
\emph{Website: https://sites.google.com/view/iasonmoutzouris/}
\end{document}